\newtheorem{theorem}{Theorem}
\newtheorem{lemma}{Lemma}
\newtheorem{corollary}{Corollary}
\newtheorem{proposition}{Proposition}
\newtheorem{proof}{Proof}
\newcommand{\Real}{\mathbb{R}}
\DeclareMathOperator{\rank}{rank}
\def\qed{\hfill $\Box$} 
\begin{document}
%
\title{Minimal controllability problem on linear structural descriptor systems with forbidden nodes}
\author{Shun Terasaki and Kazuhiro Sato \thanks{S. Terasaki and K. Sato are with the Department of Mathematical Informatics, Graduate School of Information Science and Technology, The University of Tokyo, Tokyo 113-8656, Japan, email:terasaki@g.ecc.u-tokyo.ac.jp (S. Terasaki), kazuhiro@mist.i.u-tokyo.ac.jp (K. Sato) }}
\maketitle
\thispagestyle{empty}
\pagestyle{empty}

\begin{abstract}
We consider a minimal controllability problem (MCP), which determines the minimum number of input nodes for a descriptor system to be structurally controllable.
We investigate the ``forbidden nodes'' in descriptor systems, denoting nodes that are unable to establish connections with input components.
The three main results of this work are as follows.
First, we show a solvability condition for the MCP with forbidden nodes using graph theory such as a bipartite graph and its Dulmage--Mendelsohn decomposition.
Next, we derive the optimal value of the MCP with forbidden nodes. The optimal value is determined by an optimal solution for constrained maximum matching, and this result includes that of the standard MCP in the previous work. 
Finally, we provide an efficient algorithm for solving the MCP with forbidden nodes based on an alternating path algorithm.
\end{abstract}

\begin{IEEEkeywords}
structural controllability, large-scale system, descriptor system, bipartite graph, DM decomposition
\end{IEEEkeywords}

%
\IEEEpeerreviewmaketitle

\section{Introduction} \label{sec:intro}
%
%
%


Controllability analysis for large-scale network systems, such as multi-agent systems\cite{mesbahi2010graph, rahmani2009controllability}, brain networks\cite{gu2015controllability, muldoon2016stimulation}, and power networks\cite{pagani2013power, yang2020critical} has received a great deal of interest in recent years\textcolor{black}{, because it can be used to find important nodes \cite{liu2011controllability}.}
Controllability analysis problems include:
\begin{itemize}
    \item quantitative problems; the maximization problems of controllability metrics \cite{pasqualetti2014controllability, summers2015submodularity, clark2017submodularity, romao2018distributed, sato2020controllability}.
    \item qualitative problems; selecting input problems that render the system controllable \cite{liu2011controllability, olshevsky2015minimal, pequito2015framework, clark2017input, terasaki2021minimal}.
\end{itemize}
The quantitative problems require the system parameters, which are not precisely determined in practical systems. In addition, quantitative problems often become computationally intractable when the state dimension becomes large.
Conversely, the structural information of a system, \textit{i.e.}, the nonzero patterns of system parameters, is usually known.
This is an advantage for qualitative problems that deal only with nonzero patterns.
Also, it is known that the structural controllability \cite{lin1974structural} of a structural system can be checked efficiently using graph algorithms.
Thus, for large-scale network systems, it is more appropriate to consider qualitative problems. 

Therefore, we consider a Minimal Controllability Problem (MCP) of the following structural descriptor network system with state $x(t) \in \Real^n$ and input $u(t) \in \Real^m$\textcolor{black}{, where $\Real$ is the set of real numbers}:
\begin{align}
    F\dot{x}(t) = Ax(t) + Bu(t),  \label{eq:descriptor}
\end{align}
where $F, A\in\Real^{n\times n}, B\in\Real^{n\times m}$, and $F\in \Real^{n\times n}$ can be a singular matrix. 
That is, we assume that although the specific elements of $F$, $A$, and $B$ remain unknown, their nonzero patterns are known.
The descriptor formulation aptly models practical systems featuring algebraic constraints, such as those found in electric circuit systems \cite{dai1989singular,murota00,duan2010analysis}.
MCPs can be divided into two main problems\cite{liu2016control}:
\begin{itemize}
    \item MCP0: A problem that finds an $n\times m$ matrix $B$ for system (\ref{eq:descriptor}) to be structurally controllable where $m$ is minimum.
    For $F=I_n$ or $F\neq I_n$, efficient algorithms exist for solving MCP0 \cite{liu2011controllability, olshevsky2015minimal, pequito2015framework, terasaki2021minimal}.
    
    \item MCP1: A problem that finds an $n\times n$ diagonal matrix $B$ for system (\ref{eq:descriptor}) to be structurally controllable and the number of nonzero elements in $B$ is minimum \cite{pequito2015framework}.
   \textcolor{black}{Although a polynomial time algorithm exists \cite{clark2017input} for a special case of \eqref{eq:descriptor} with $F\neq I_n$,
    MCP1 is known to be NP-hard \cite{terasaki2021minimal} in general.}
\end{itemize}
It should be noted that as mentioned in Section 3.2 in \cite{RAMOS2022110229},
there are only a few papers on MCP0 or MCP1 for system \eqref{eq:descriptor} with $F\neq I_n$
although just structural controllability analysis under the assumption of a given $(F,A,B)$ has been studied in \cite{murota1984descriptor, murota00, reinschke1997digraph}.

The previous work in \cite{terasaki2021minimal} on structural descriptor system (\ref{eq:descriptor}) has not considered constraints on the input destination. There is a gap between practical situations since most physical systems have state variables to which inputs cannot be directly connected. 
For instance, consider a system in which the position $x(t)$ and velocity $v(t)$ of an object are the state variables. In this case, the state equation involves $\dot{x}(t)=v(t)$, but it does not make practical sense to add an input to this equation.
Thus, specifying forbidden targets that cannot be connected to inputs is an important practical constraint.
Therefore, \cite{olshevsky2015minimal} introduced forbidden nodes to MCP1 for system (\ref{eq:descriptor}) with $F=I_n$.
However, no work applies MCPs with $F\ne I_n$. 

In this paper, we address MCP0 with forbidden nodes for structural descriptor system (\ref{eq:descriptor}), because, in general, MCP1 for system (\ref{eq:descriptor}) with $F\ne I_n$ is NP-hard, as shown in \cite{terasaki2021minimal}.
Here, the forbidden nodes correspond to the indices of ``equations'', while those in \cite{olshevsky2015minimal}, which studied (\ref{eq:descriptor}) with $F=I_n$, correspond to the indices of ``variables''. 
This naturally generalizes to $F \ne I_n$.
In fact, 
\begin{itemize}
    \item for $F=I_n$, the time evolution of $x_i$ is characterized by the $i$-th equation of (\ref{eq:descriptor}).
    Thus, in this case, we can regard the index of equations as that of variables.
    \item for $F\ne I_n$, the time evolution of $x_i$ is not characterized by the $i$-th equation of (\ref{eq:descriptor}).
    Thus, in contrast to the case of $F=I_n$, here we cannot regard the index of equations as that of variables.
\end{itemize}

The contributions of this study can be summarized as follows.
\begin{itemize}
    \item We show a necessary and sufficient condition for the existence of the optimal solution of MCP0 with forbidden equations for structural descriptor system (\ref{eq:descriptor}), which is described in the language of graph theory. This result is also useful in constructing the optimal solution.
    \item We provide the optimal value of MCP0 with forbidden equations for structural descriptor system (\ref{eq:descriptor}), by employing the graph-theoretic properties of the system, such as a bipartite graph and its Dulmage--Mendelsohn (DM) decomposition. The optimal value shows that the minimum number of input nodes is determined by a variant of the maximum matching problem with constraints on the matched nodes. This result includes that of the MCP0 without forbidden equations for descriptor system (\ref{eq:descriptor}) \cite{terasaki2021minimal}. 
    \item We also provide an efficient algorithm for solving the above special matching problem by using the alternating path algorithm. The time complexity of this algorithm is $O(|V|+|E|\sqrt{|V|})$, which is on par with the algorithm for the MCP0 \cite{liu2011controllability, terasaki2021minimal} without forbidden equations, where $|V|$ and $|E|$ are the numbers of nodes and edges of the bipartite graph corresponding to descriptor system (\ref{eq:descriptor}), respectively.
\end{itemize}

The remainder of this paper is organized as follows.
The basic concepts of graph theory are summarized in Section~\ref{sec:graph}.
The formulation of MCP0 with forbidden equations for structural descriptor systems is described in Section~\ref{sec:prb}.
In Section~\ref{sec:analysis}, we provide the analysis and the algorithm of MCP0 with forbidden equations for the descriptor system (\ref{eq:descriptor}). The conclusions are presented in Section~\ref{sec:conclusion}.

\label{sec:prevresults}
\section{Basic concepts of graph theory}
\label{sec:graph}
In this section, we present a comprehensive overview of the fundamental concepts of graph theory that are used in this paper.

A strongly connected component (SCC) of a directed graph with node set $V$ is a maximal subset $C\subseteq V$ whose nodes $u,v \in C$ can be connected by a directed path on the graph. 

\textcolor{black}{Let $G = (V^+,V^-;E)$ be a bipartite graph.
For an edge $e = (v^+,v^-) \in E$, $\partial^+ e$ and $\partial^- e$ denote the 
 nodes of $v^+\in V^+$ and $v-\in V^-$, respectively.
 That is, $\partial^+: E\rightarrow V^+$ and $\partial^-: E\rightarrow V^-$. 
The edge set $M\subseteq E$ is a matching if it does not share nodes of each edge.} A matching $M$ is termed maximum matching if $M$ contains the largest possible number of edges.
We define symbol $\nu(G)$ as the size of a maximum matching of $G$.


We introduce the DM decomposition for a bipartite graph $G = (V^+,V^-;E)$, which is the unique decomposition algorithm for bipartite graphs (see \cite{murota00} for details). Algorithm \ref{alg:dmdecomp} describes DM decomposition.
We define $M$ as a maximum matching of $G$ and an auxiliary directed graph $\tilde{G}_M$. The edges of $\tilde{G}_M$ are oriented from $V^+$ to $V^-$ except for $M$.
The decomposition is illustrated in Fig.~\ref{fig:dmdecompalgorithm}.
\begin{figure}[!t]
  \begin{algorithm}[H]
    \caption{Algorithm for DM decomposition.}
    \label{alg:dmdecomp}
    \begin{algorithmic}[1]
    \State Find a maximum matching $M$ on $G$.
    \State Construct an auxiliary directed graph $\tilde{G}_M$.
    \State $V_0 := \{ v\ \in V^+\cup V^- \mid \exists u\in V^+\setminus \partial^+ M \ u \to_{\tilde{G}_M} v \}$, where $u \to_{\tilde{G}_M} v$ indicates the existence of a directed path on $\tilde{G}_M$ from $u$ to $v$.
    \State $V_\infty := \{ v\ \in V^+\cup V^- \mid \exists u\in  V^-\setminus \partial^- M \ v\to_{\tilde{G}_M} u \}$.
    \State \textcolor{black}{Let $G'$ be the subgraph of $\tilde{G}_M$ defined by deleting all nodes of $V_0\cup V_\infty$ and all edges adjacent to the nodes.}
    \State Let $V_k\ (k=1,\dots,b)$ be the SCCs of $G'$. Let undirected graph $G_k\ (k=0,1,\dots,b,\infty)$ be the subgraph of $G$ induced on $V_k\ (k=0,1,\dots,b,\infty)$.
    \end{algorithmic}
  \end{algorithm}
\end{figure}
Subgraphs $G_k \ ( k = 0 , \dots , b , \infty )$ in Step 6 of Algorithm \ref{alg:dmdecomp} are called DM components of $G$. $G_k \ ( k = 1,\dots,b )$ are called consistent DM components; $G_0$ and $G_\infty$ are called inconsistent DM components.
The order $G_i \preceq G_j$ for the consistent DM components $G_i$ and $G_j$ is defined as
\begin{center}
     \text{``There is a directed path on $\tilde{G}_M$ from $G_j$ to $G_i$,"}
\end{center}
and the order between the consistent DM component $G_i$ and inconsistent DM components $G_0$ and $G_\infty$ are defined as $G_0 \preceq G_i$ and $G_i \preceq G_\infty$, respectively. Then, $\preceq$ is a partial order.
Moreover, the decomposition constructed by Algorithm \ref{alg:dmdecomp} does not depend on an initially chosen maximum matching $M$ of $G$ in step 1), as shown in Lemma~2.3.35 in \cite{murota00}.
For example, in Fig.~\ref{fig:dmdecompalgorithm}, there is a directed edge from $G_2$ to $G_1$, and thus
\begin{align*}
    G_0 \preceq G_1 \preceq G_2 \preceq G_\infty.
\end{align*}

The greatest computational bottleneck in the construction of the DM decomposition is to find the maximum matching of $G$. This can be achieved in $O(|E|\sqrt{|V|})$ by using the augmentation path algorithm \cite{korte2012combinatorial}. Thus, the computational complexity of DM decomposition is $O(|E|\sqrt{|V|})$.

\begin{figure}
    \centering
    \includegraphics[width=8.5cm]{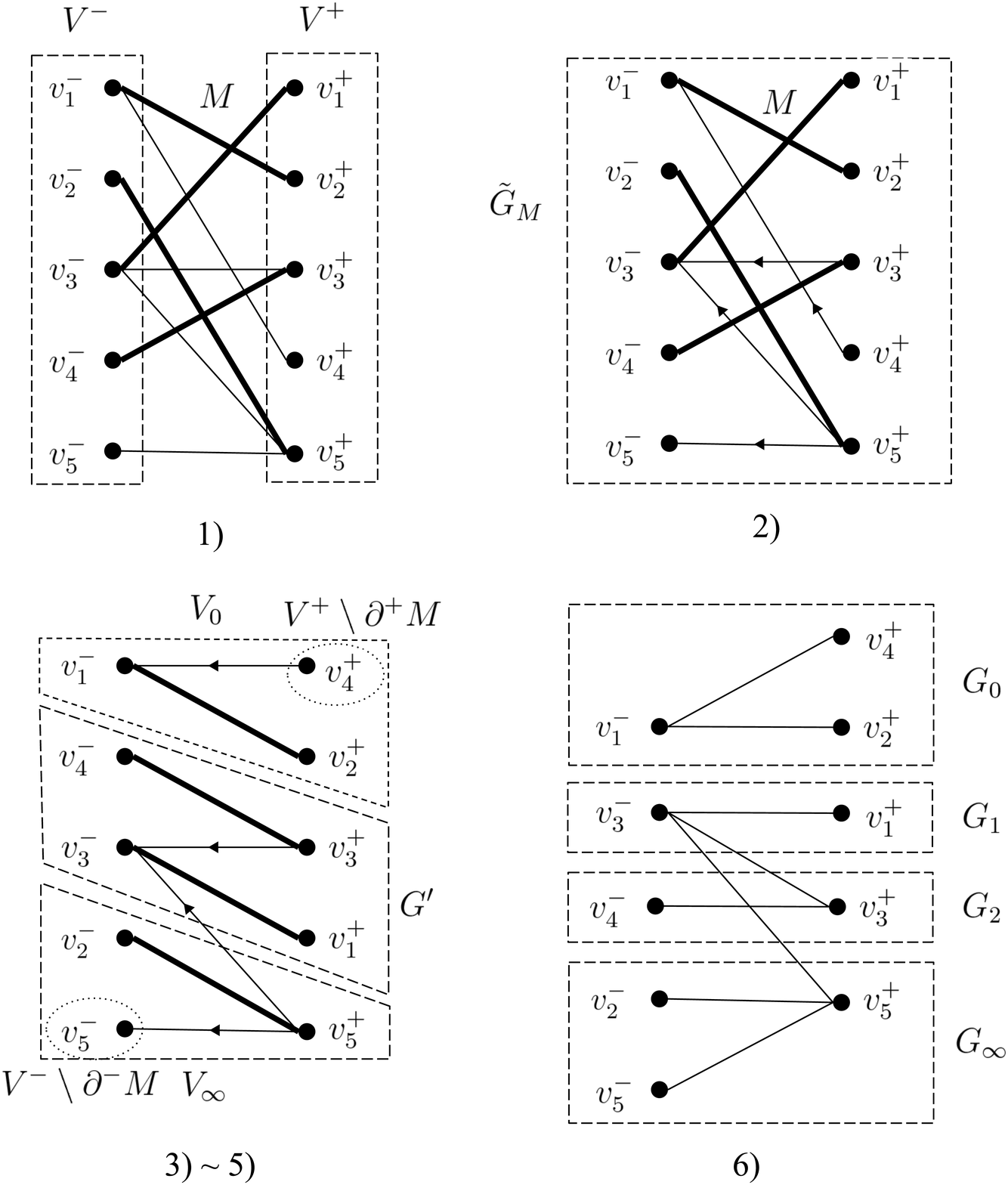}
    \caption{Construction of DM decomposition. The bold edges represent maximum matching $M$.}
    \label{fig:dmdecompalgorithm}
\end{figure}

\section{Problem Settings}
\label{sec:prb}
In this section, we formulate MCP0 with forbidden equations for the descriptor system (\ref{eq:descriptor}).

First, we assume that system (\ref{eq:descriptor}) is solvable, \textit{i.e.}, for any initial state $x(0)$ with an admissible input $u(t)$, there exists a unique solution $x(t)$ to Eq.~(\ref{eq:descriptor}). This condition is equivalent to 
\begin{align}
    \rank (A-sF) = n, \label{eq:solvability}
\end{align}
where $s$ is an indeterminant \cite{murota00} \cite{yip1981solvability}.

In this paper, we call system (\ref{eq:descriptor}) controllable if for any admissible initial state $x(0)$ that satisfies equation (\ref{eq:descriptor}), there exists an input $u(t)$ and a final time $T\geq 0$ such that $x(T) = 0$. This controllability is usually referred to as R-controllability. Additional controllability concepts can be found within the framework of descriptor system theory \cite{berger2013controllability}.

An algebraic characterization of controllability can be described as follows \cite{yip1981solvability, berger2013controllability}:
\begin{proposition}
    \label{prop:scalg}
    Descriptor system (\ref{eq:descriptor}) is controllable \textit{if and only if} 
    \begin{align}
        \rank \left[ A-zF \mid B \right] & = n, \label{eq:rank}
    \end{align}
    \textcolor{black}{where $z$ is any complex number.}
\end{proposition}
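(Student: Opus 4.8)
The plan is to reduce the descriptor system to a decoupled pair consisting of a standard state-space subsystem and a purely algebraic one, and then to invoke the classical Popov--Belevitch--Hautus (PBH) test on the state-space part. First I would exploit the solvability assumption \eqref{eq:solvability}: the condition $\rank(A - sF) = n$ for the indeterminate $s$ says precisely that the matrix pencil $(A,F)$ is regular, so the Weierstrass (Kronecker) canonical form applies. Concretely, there exist nonsingular $P,Q \in \Real^{n\times n}$ simultaneously block-diagonalizing $PFQ$ and $PAQ$ into a ``slow'' finite-eigenvalue block and a ``fast'' nilpotent block,
\begin{align*}
    PFQ = \begin{bmatrix} I_{n_1} & 0 \\ 0 & N \end{bmatrix}, \qquad PAQ = \begin{bmatrix} J & 0 \\ 0 & I_{n_2} \end{bmatrix},
\end{align*}
with $N$ nilpotent and $n_1 + n_2 = n$. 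Writing $PB = [B_1^\top\ B_2^\top]^\top$ and changing coordinates by $x = Q\tilde{x}$ splits \eqref{eq:descriptor} into $\dot{x}_1 = Jx_1 + B_1 u$ and $N\dot{x}_2 = x_2 + B_2 u$.

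Next I would argue that R-controllability of \eqref{eq:descriptor} is equivalent to controllability of the slow pair $(J,B_1)$. Since $N$ is nilpotent, the fast subsystem determines its state uniquely and algebraically as $x_2 = -\sum_{i=0}^{h-1} N^i B_2 u^{(i)}$, where $h$ is the index of nilpotency; it therefore imposes the admissibility constraint on $x(0)$ but contributes no reachability obstruction once inputs and their derivatives are free. Hence the ability to steer any admissible state to zero is governed solely by the ordinary differential subsystem, so R-controllability holds \emph{iff} $(J,B_1)$ is controllable in the usual sense. By the classical PBH test, the latter is equivalent to $\rank[J - zI_{n_1} \mid B_1] = n_1$ for every $z \in \mathbb{C}$.

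Finally I would translate this back to \eqref{eq:rank}. Using the same $P,Q$ and the nonsingularity of $\mathrm{diag}(Q,I_m)$, for every $z \in \mathbb{C}$,
\begin{align*}
    \rank[A - zF \mid B] = \rank\begin{bmatrix} J - zI_{n_1} & 0 & B_1 \\ 0 & I_{n_2} - zN & B_2 \end{bmatrix}.
\end{align*}
Because $N$ is nilpotent, $\det(I_{n_2} - zN) = 1$ for all finite $z$, so the second block row always has full row rank $n_2$ and decouples; thus the displayed rank equals $n_2 + \rank[J - zI_{n_1} \mid B_1]$. Consequently $\rank[A - zF \mid B] = n$ for all $z$ \emph{iff} $\rank[J - zI_{n_1} \mid B_1] = n_1$ for all $z$, which closes the equivalence.

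I expect the principal obstacle to be the second step: making precise the notion of an \emph{admissible} initial state and rigorously showing that the algebraic fast subsystem neither creates nor destroys R-controllability. This requires careful handling of the space of admissible inputs (smooth enough to admit the derivatives $u^{(i)}$) and of the consistency conditions linking $x(0)$ to the fast dynamics, whereas the canonical-form reduction and the final rank bookkeeping are essentially mechanical.
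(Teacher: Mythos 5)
Your proposal is correct, but note that the paper itself offers no proof of Proposition~\ref{prop:scalg}: it is imported verbatim from the cited literature \cite{yip1981solvability, berger2013controllability}, and your argument --- Weierstrass canonical form under the regularity guaranteed by \eqref{eq:solvability}, reduction of R-controllability to the slow pair $(J,B_1)$, the PBH test, and the rank bookkeeping via the invertibility of $I_{n_2}-zN$ --- is essentially the standard proof found in those references. The one step you flag as delicate is indeed where the care is needed, but it closes as you expect: the admissible initial states are exactly those consistent with $x_2(0)=-\sum_{i=0}^{h-1}N^iB_2u^{(i)}(0)$, and after steering $x_1$ to zero at some $T'<T$ one may take $u$ smooth and identically zero on a neighborhood of $T$, which forces $x_2(T)=0$ as well, so the fast block contributes no obstruction.
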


System (\ref{eq:descriptor}) is termed structurally controllable if condition (\ref{eq:rank}) in Proposition \ref{prop:scalg} holds for (\ref{eq:descriptor}) with generic matrices $F$, $A$, and $B$.
It should be noted that a matrix is considered generic if each nonzero element is an independent parameter. For a more precise definition of the generic matrix, see \cite{murota00}.

We now introduce MCP0 with forbidden equations for descriptor system (\ref{eq:descriptor}). Let $R = \{e_1, \dots, e_n\}$ be a set of equation indices and $\mathcal{F}$ be a subset of $R$ that denotes forbidden equations. Then, MCP0 with forbidden equations for descriptor system (\ref{eq:descriptor}) can be formalized as
{\small
\begin{numcases}{}
        \underset{\text{$B\in \mathcal{G}^{n\times m}$}}{\text{minimize}} & $m$ \label{prob:cmcp0} \\
        \text{subject to} & I) system (\ref{eq:descriptor}) is structurally controllable, \notag \\
        & II) indices of nonzero rows of $B$ are in $R\setminus \mathcal{F}$, \notag
\end{numcases}
}
\noindent where $\mathcal{G}^{n\times m}$ denote the set of all $n\times m$ generic matrices.
The significant difference between the standard MCP0 and Problem (\ref{prob:cmcp0}) is II) in (\ref{prob:cmcp0}). This constraint limits the input destination.

For instance, consider descriptor system (\ref{eq:descriptor}) with
\begin{align}
    \label{eq:examplesystem}
    F = \begin{bmatrix}
        0 & f_1  & 0 & 0 & f_2 \\
        f_3 & 0 & 0 & 0 & 0 \\
        0 & f_4 & 0 & 0 & 0 \\
        0 & 0 & 0 & f_5 & 0 \\
        0 & 0 & 0 & 0 & 0
    \end{bmatrix},\ 
    A  = \begin{bmatrix}
        a_1 & 0 & 0 & 0 & 0 \\
        a_2 & 0 & 0 & 0 & 0 \\
        a_3 & 0 & 0 & 0 & 0 \\
        0 & 0 & a_4 & 0 & a_5 \\
        0 & 0 & a_6 & a_7 & 0
    \end{bmatrix}.
\end{align}
Then, $R = \{e_1,e_2,\dots, e_5\}$.
Let $\mathcal{F} = \{e_3, e_4\}$ be the set of indices of forbidden equations. 
The matrices
\begin{align}
    B_1 = \begin{bmatrix}
        b_1 & 0 & 0 & 0 & 0
    \end{bmatrix}^\top, \ 
    B_2 = \begin{bmatrix}
        0 & 0 & 0 & 0 & b_2\\
        0 & b_1 & 0 & 0 & 0
    \end{bmatrix}^\top
    \label{eq:exampleinputs}
\end{align}
satisfy condition II).
\section{Analysis and Algorithm}
\label{sec:analysis}
In this section, we provide the analysis and the algorithm of MCP0 with forbidden equations for descriptor system (\ref{eq:descriptor}) with generic matrices using a graph-theoretic approach.

To this end, we first describe the graph representation of the descriptor system (\ref{eq:descriptor}) with generic matrices and graph-theoretical controllability.
Although there are several graph representations for descriptor system (\ref{eq:descriptor}) \cite{reinschke1997digraph, yamada1985generic, murota00}, we used the bipartite graph representation \cite{murota00} in this study.
While the directed graph representation is common for system (\ref{eq:descriptor}) with $F=I_n$, the bipartite graph representation is often used for $F\ne I_n$ \cite{murota1984descriptor}.

The bipartite graph $G = (V^+,V^-; E)$ associated with descriptor system (\ref{eq:descriptor}) is defined as follows: the node sets $V^+$ and $V^-$ are defined as 
\begin{align*}
\begin{cases}
   V^+ := X \cup U, \\
   V^- := \{e_1,\dots,e_n\},
\end{cases}
\end{align*}
where the state node set $X$ and the input node set $U$ are defined as 
\begin{align*}
    X := \{x_1,\dots,x_n\},\quad U := \{u_1,\dots,u_m\}, 
\end{align*}
respectively, and $e_i$ in $V^-$ corresponds to the $i$-th equation of system (\ref{eq:descriptor}). That is, $V^+$ consists of state variables and inputs, and $V^-$ is the set of indices of equations.
Then, the edge set $E$ is defined as
\begin{align}
   E := E_A \cup E_F \cup E_B,
\label{eq:defgraph}
\end{align}
with $E_A := \{ ( e_i , x_j ) \mid A_{ij} \neq 0 \}$, $E_F := \{ ( e_i , x_j ) \mid F_{ij} \neq 0 \}$ and $E_B = \{ ( e_i , u_j ) \mid B_{ij} \neq 0 \}$.
An edge belonging to $E_F$ is termed an s-arc.
We also define important subgraphs of $G$ as $G_{A-sF} = (X,V^-; E_A\cup E_F)$, $G_{\left[A \mid B\right]} = (V^+,V^-; E_A\cup E_B)$, and $G_A = (X,V^-;E_A)$.
For example, the bipartite representation of descriptor system (\ref{eq:descriptor}) with (\ref{eq:examplesystem}) and $B_2$ in (\ref{eq:exampleinputs}), and its subgraphs are illustrated in Fig.~\ref{fig:example}.

\begin{figure*}[t]
   \begin{tabular}{cccc}
     \begin{minipage}[t]{0.24\hsize}
       \centering
       \includegraphics[keepaspectratio, scale=0.3]{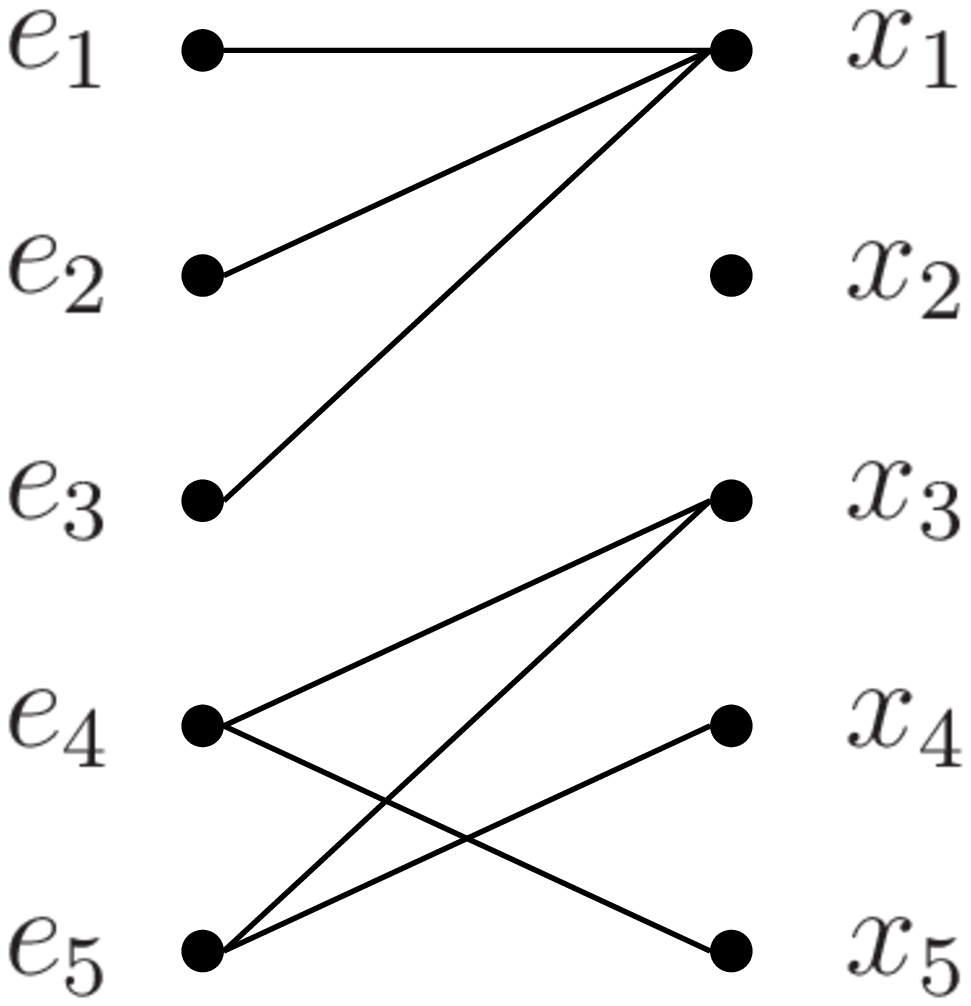}
       \subcaption{$G_A$}
     \end{minipage} &
     \begin{minipage}[t]{0.24\hsize}
       \centering
       \includegraphics[keepaspectratio, scale=0.3]{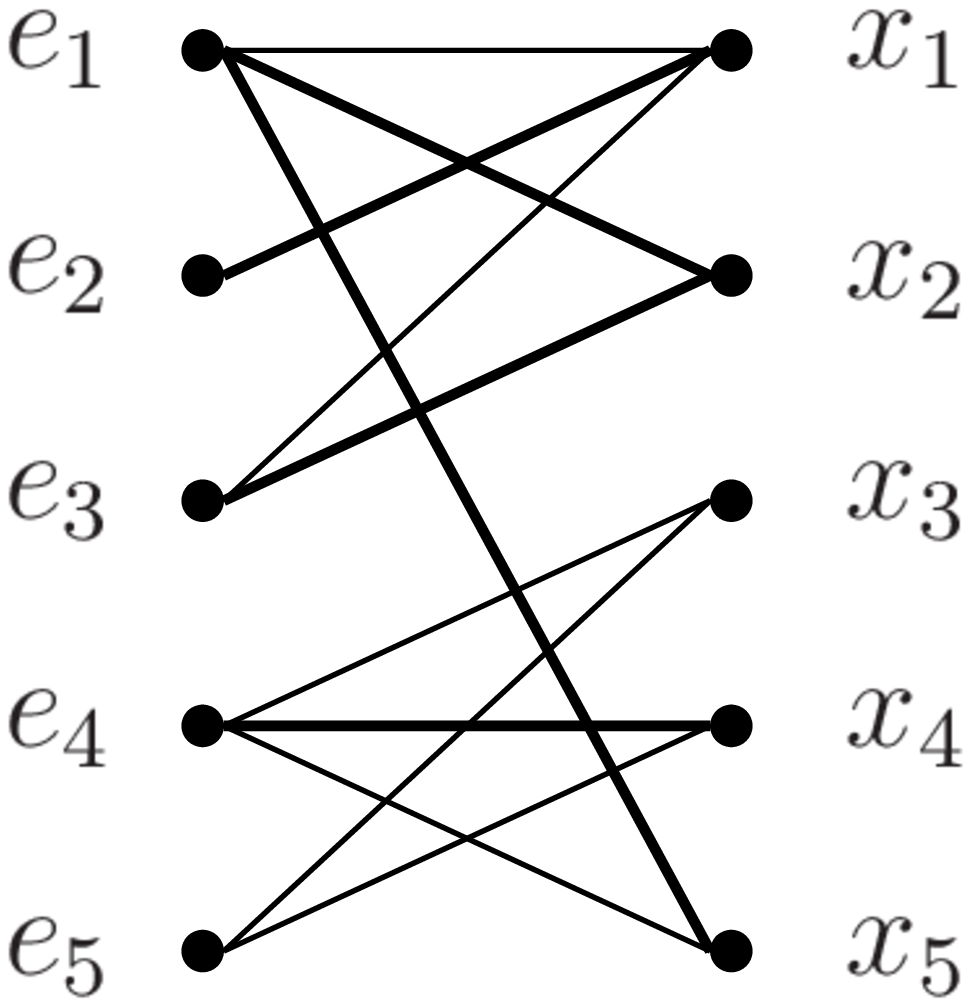}
       \subcaption{$G_{A-sF}$}
     \end{minipage}
     \begin{minipage}[t]{0.24\hsize}
       \centering
       \includegraphics[keepaspectratio, scale=0.3]{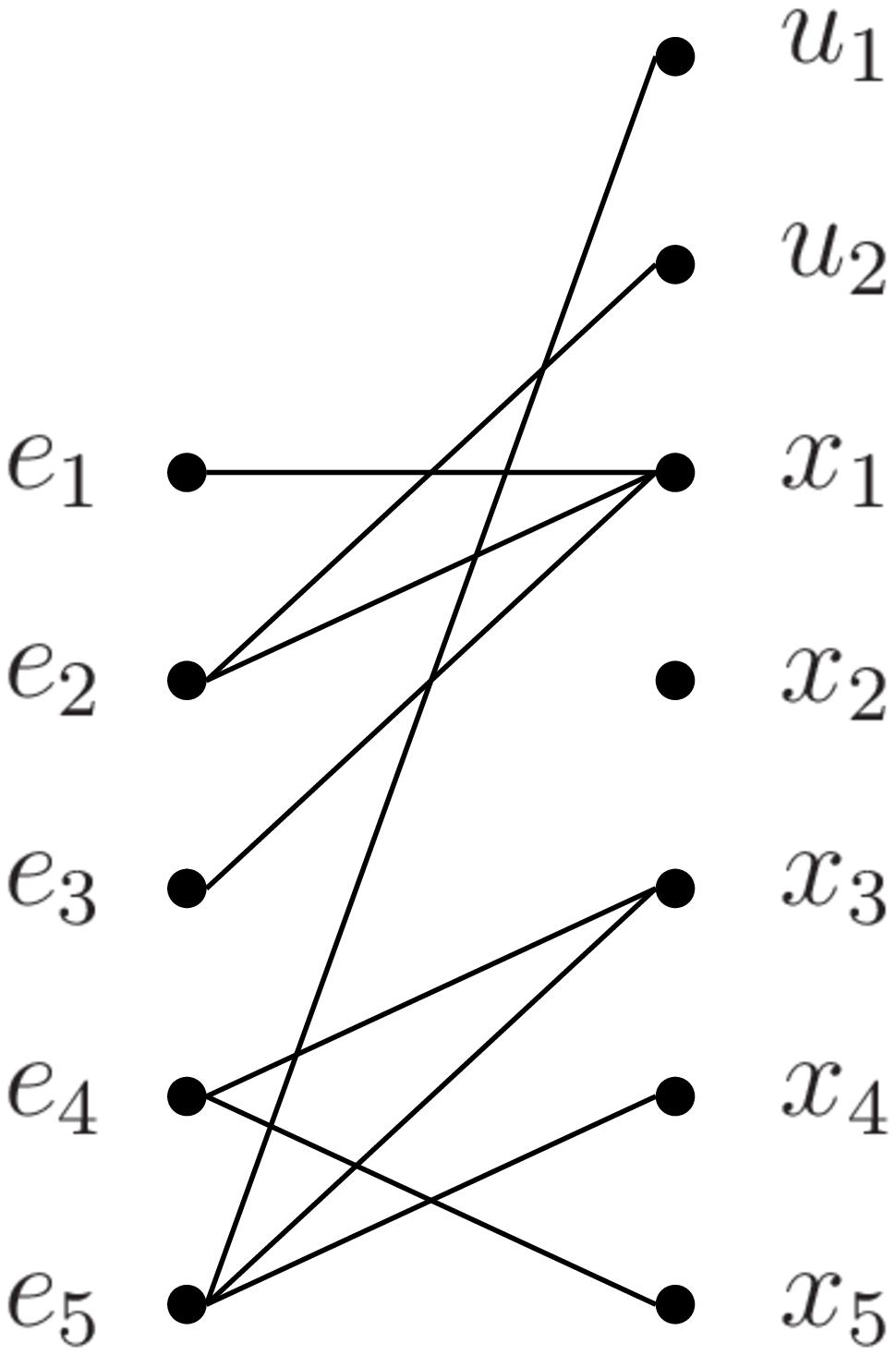}
       \subcaption{$G_{\left[A\mid B\right]}$}
     \end{minipage} &
     \begin{minipage}[t]{0.24\hsize}
       \centering
       \includegraphics[keepaspectratio, scale=0.3]{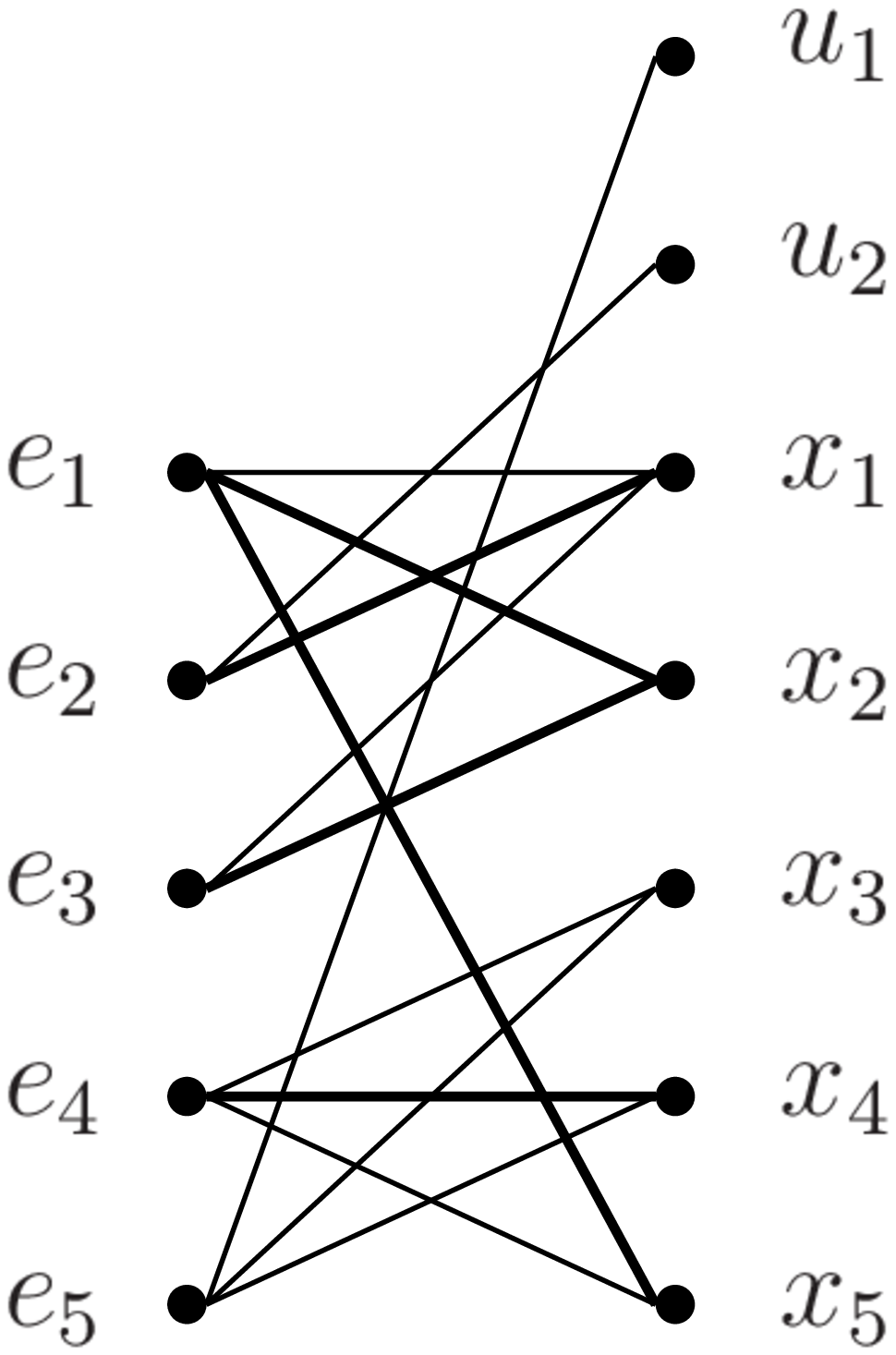}
       \subcaption{$G$}
     \end{minipage} 
   \end{tabular}
    \caption{Bipartite graph representation of descriptor system (\ref{eq:descriptor}) with (\ref{eq:examplesystem}) and $B_2$ in (\ref{eq:exampleinputs}). The bold edges represent s-arcs.}
    \label{fig:example}
\end{figure*}      
    
\begin{figure}
    \centering
    \includegraphics[width=3.5cm]{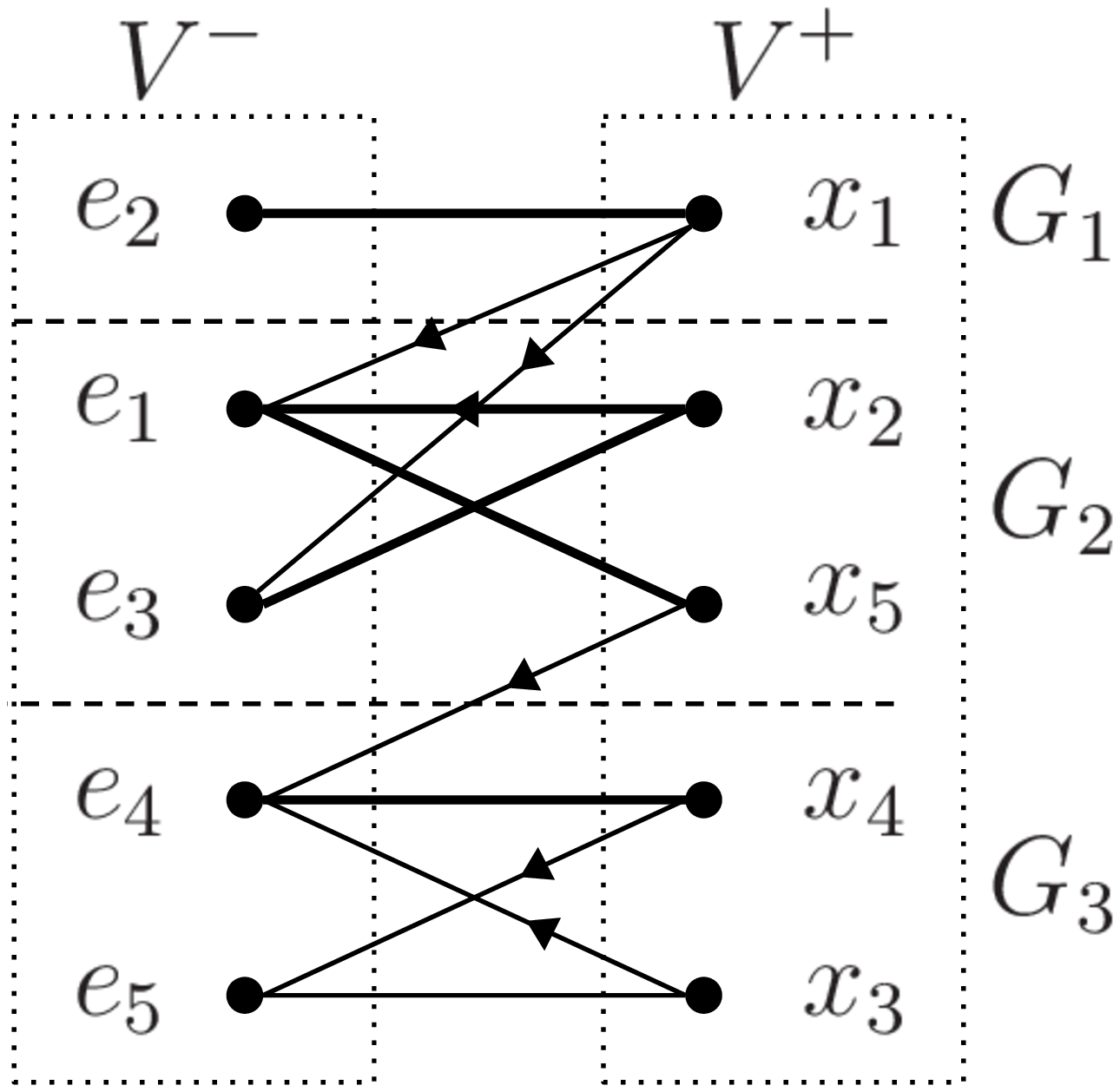}
    \caption{DM decomposition of $G_{A-sF}$ of descriptor system (\ref{eq:descriptor}) with (\ref{eq:examplesystem}). The bold edges represent s-arcs.}
    \label{fig:exampledm}
\end{figure}
\begin{figure}
    \centering
    \includegraphics[width=3.5cm]{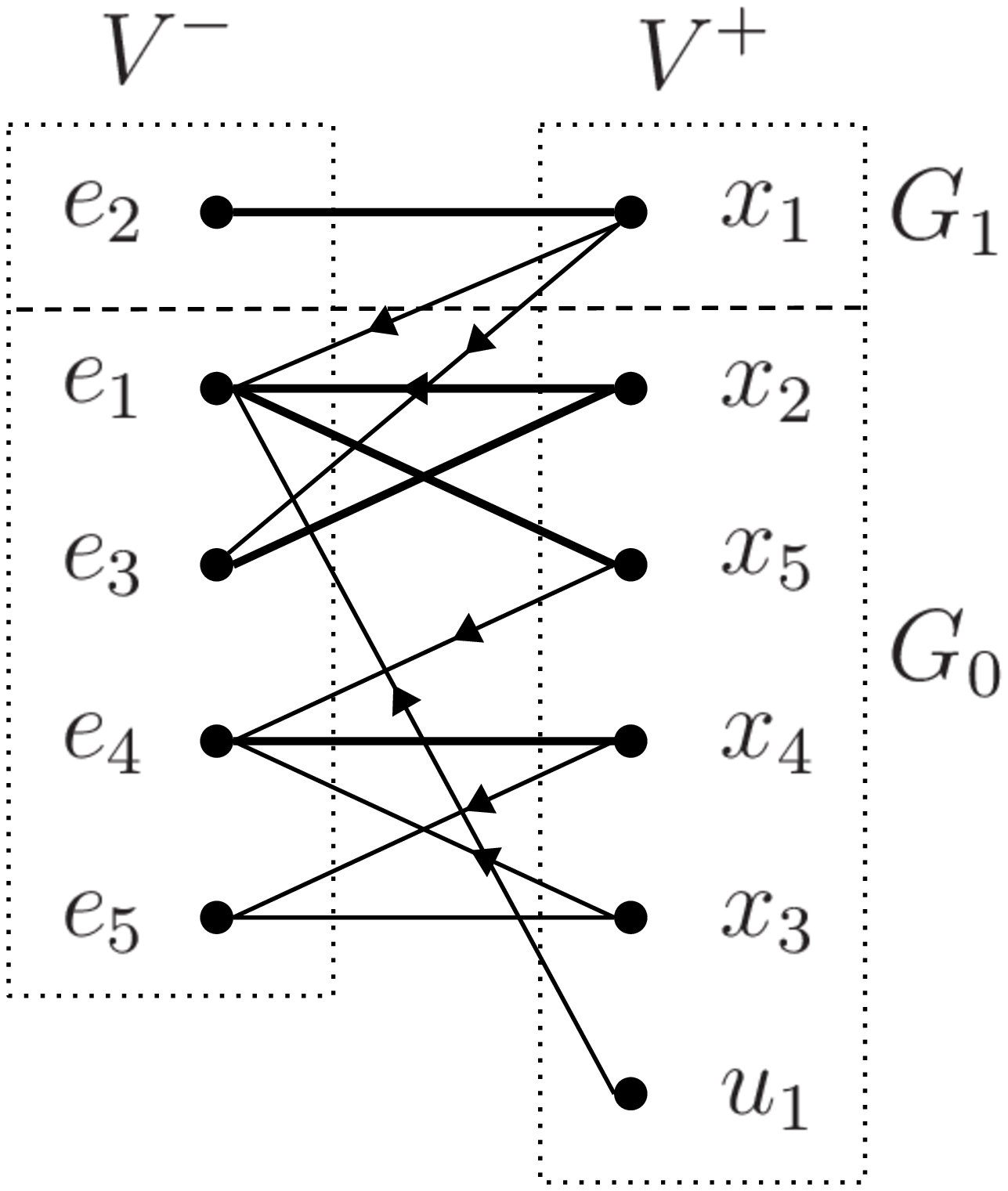}
    \caption{DM decomposition of $G$ of descriptor system (\ref{eq:descriptor}) with (\ref{eq:examplesystem}) and $B_1$ in (\ref{eq:exampleinputs}). The bold edges represent s-arcs.}
    \label{fig:exampledmwithinput}
\end{figure}

\begin{figure}
    \centering
    \includegraphics[width=3.5cm]{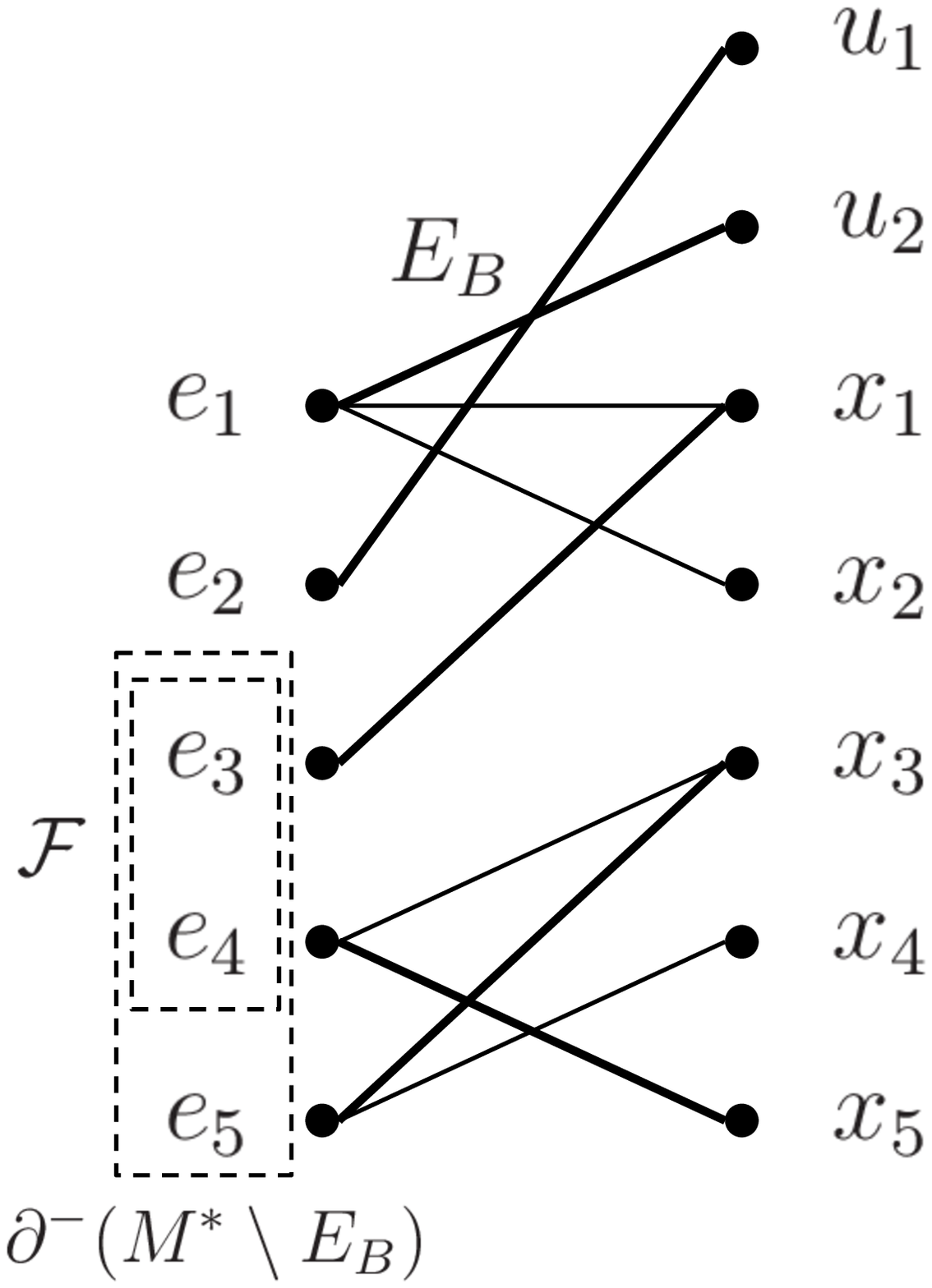}
    \caption{The optimal solution $B$ in (\ref{eq:solutionB}) to Problem (\ref{prob:cmcp0}) with $\mathcal{F} = \{e_3, e_4\}$. The bold edges represent the maximum matching $M^\ast$ of $G_{\left[A\mid B\right]}$. The optimality of $B$ will be shown in Section \ref{subsec:algforprobcmcp0}.}
    \label{fig:thm1_proofb}
\end{figure}
 
Furthermore, for a DM component of $\mathcal{G}$ that has s-arcs \cite{terasaki2021minimal}, we call it a DM s-component, where $\mathcal{G}$ is $G$ or $G_{A-sF}$.
Let $G_k \ ( k = 0 , \dots , b , \infty )$ be DM components of $\mathcal{G}$, and $G_k \ ( k = 1,\dots,b )$ are called consistent DM components; $G_0$ and $G_\infty$ are termed inconsistent DM components. 
The maximal consistent DM s-component is a consistent DM s-component $G_k$ of $\mathcal{G}$ such that no other DM s-components are greater than $G_k$ related to the partial order $\preceq$. Note that there can be multiple maximal consistent DM s-components.

For instance, consider descriptor system (\ref{eq:descriptor}) with parameters given in (\ref{eq:examplesystem}).
The corresponding DM decomposition of the bipartite representation $G_{A-sF}$ is depicted in Fig.~\ref{fig:exampledm}. $G_1$, $G_2$, and $G_3$ are the consistent DM components of $G_{A-sF}$, and $G_3 \preceq G_2 \preceq G_1$.
That is, a maximal consistent DM s-component of this graph is $G_1$, because there is no edge to enter $G_1$ from other DM components.

The following graph-theoretic characterization of structural controllability for descriptor system (\ref{eq:descriptor}) is found in \cite{murota00}.
\begin{proposition}
\label{prop:sc}
Descriptor system (\ref{eq:descriptor}) is structurally controllable \textit{if and only if}
\begin{enumerate}
    \item $\nu(G_{A-sF}) = n$,
    \item $\nu(G_{\left[A \mid B\right]}) = n$,
    \item No consistent DM components of $G$ contain s-arcs,
\end{enumerate}
where $\nu(\mathcal{G})$ is the size of a maximum matching of a bipartite graph $\mathcal{G}$.
\end{proposition}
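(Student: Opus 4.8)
The plan is to reduce the structural-controllability assertion to the algebraic rank criterion of Proposition~\ref{prop:scalg} and then translate each rank condition into a statement about maximum matchings, exploiting the genericity of the entries. Concretely, the system is structurally controllable if and only if, for generic $F,A,B$, one has $\rank[A-zF\mid B]=n$ for every $z\in\mathbb{C}$. First I would invoke the standard fact that the rank of a matrix whose nonzero entries are algebraically independent equals the size of a maximum matching of its bipartite graph. Applied to the pencil $A-sF$ over $\Real(s)$, this identifies the standing solvability assumption $\rank(A-sF)=n$ with condition~1), $\nu(G_{A-sF})=n$; applied to $[A\mid B]$ it identifies $\rank[A\mid B]=n$ with condition~2), $\nu(G_{[A\mid B]})=n$.

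Next I would study $\rank[A-zF\mid B]$ as a function of the single variable $z$. Over the field $\Real(z)$ its generic rank equals $\nu(G)$, where the s-arcs (edges of $E_F$) may be used in the matching because they carry the $z$-dependent entries; hence $\rank[A-zF\mid B]=n$ for all but finitely many $z$ if and only if $\nu(G)=n$, which is implied by condition~2). The remaining task is to control the finitely many $z$ at which the rank can drop. Using the DM decomposition of $G$ I would bring $[A-zF\mid B]$ into block-triangular form whose diagonal blocks are the DM components: the consistent components are square and, by genericity, each contributes a determinant that enters the determinantal divisors of the pencil as a polynomial in $z$, while the inconsistent tail components carry column slack and retain full row rank for every $z$. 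A consistent block has a constant, generically nonzero determinant precisely when it contains no s-arc, and a nonconstant determinant, hence a finite root $z_0$ at which the rank drops, precisely when it contains an s-arc. This is exactly condition~3).

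Assembling the two directions: for \emph{necessity}, if any of the three conditions fails I exhibit a $z\in\mathbb{C}$ with $\rank[A-zF\mid B]<n$, namely the pencil is singular when condition~1) fails, the rank at $z=0$ equals $\rank[A\mid B]<n$ when condition~2) fails, and a root of a consistent block's determinant is an uncontrollable finite mode when condition~3) fails, each contradicting Proposition~\ref{prop:scalg}. For \emph{sufficiency}, condition~2) yields $\nu(G)=n$ and hence generic rank $n$, condition~3) makes every consistent block determinant a nonvanishing constant so that no finite rank drop occurs, and condition~1) guarantees the regularity under which the criterion of Proposition~\ref{prop:scalg} applies; together these give $\rank[A-zF\mid B]=n$ for all $z$.

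The main obstacle is the DM-decomposition rank analysis of the middle step: one must prove rigorously that the \emph{only} source of a finite rank drop of the pencil is an s-arc lying inside a \emph{consistent} component, and that tail components never lower the row rank for any $z$. This rests on the genericity (algebraic independence) of the nonzero entries, which forces the block determinants to have the expected degree in $z$ and rules out accidental cancellations; positions that carry both an $A$-entry and an $F$-entry must be treated with care here, since they do not vanish at $z=0$. A secondary bookkeeping difficulty is keeping the matching conditions on the three graphs $G$, $G_{A-sF}$ and $G_{[A\mid B]}$ consistent, so that conditions~2) and~3) correctly reassemble into ``$\nu(G)=n$ with no s-arc in any consistent component.''
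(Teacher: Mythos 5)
First, a point of reference: the paper does not prove Proposition~\ref{prop:sc} at all --- it is quoted directly from \cite{murota00} --- so there is no in-paper argument to compare yours against. Your sketch follows the standard Murota-style route (reduce to the rank test of Proposition~\ref{prop:scalg}, use ``generic rank equals maximum matching'' for conditions 1) and 2), and block-triangularize the pencil by the DM decomposition of $G$ to locate the finite $z$ at which the rank can drop), and that architecture is the right one.

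However, the step you yourself flag as the ``main obstacle'' is the entire content of the proposition, and as written your treatment of it has both a missing idea and an incorrect claim. The missing idea: to justify ``a consistent block has a nonconstant determinant in $z$ precisely when it contains an s-arc'' you need the full indecomposability of consistent DM components, i.e., that \emph{every} edge of such a component lies in some perfect matching of that component; only then does the mere presence of an s-arc force a matching term of positive degree in $z$, with genericity (distinctness of the resulting monomials in the independent parameters) ruling out cancellation of the leading coefficient. Without this fact, an s-arc lying on no perfect matching of its block would leave the determinant constant and your ``precisely when'' would fail. The incorrect claim: the horizontal tail does \emph{not} ``retain full row rank for every $z$.'' A tail row whose nonzero entries all come from $F$ (pure s-arcs with no $A$-entry in those positions) is identically zero at $z=0$; for example, with $F=\left[\begin{smallmatrix} f_1 & f_2 \\ 0 & 0\end{smallmatrix}\right]$, $A=\left[\begin{smallmatrix} 0 & 0 \\ a_1 & a_2\end{smallmatrix}\right]$, $B=\left[\begin{smallmatrix} 0 \\ b\end{smallmatrix}\right]$, the whole graph $G$ is the horizontal tail, condition 3) holds vacuously, yet the rank drops at $z=0$ --- it is condition 2) that fails. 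The correct division of labor is that condition 2) controls the point $z=0$ directly (term-rank of $[A\mid B]$ equals the rank of the pencil at $z=0$ for generic entries), while condition 3) together with a genericity argument showing that the determinantal divisors of the horizontal tail are monomials in $z$ controls all $z\neq 0$. Since your sufficiency argument uses condition 2) only to conclude that the generic rank is $n$, it leaves a possible rank drop of the tail at $z=0$ uncovered; the fix is to invoke condition 2) for $z=0$ itself and reserve the DM block analysis for $z\neq 0$.
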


Condition 1) represents the solvability of the system described in Eq.~(\ref{eq:solvability}) in Section~\ref{sec:prb},
and the latter two conditions correspond to (\ref{eq:rank}) in Proposition \ref{prop:scalg}.
From assumption (\ref{eq:solvability}), condition 1) in Proposition~\ref{prop:sc} is always satisfied.
Assumption (\ref{eq:solvability}) also implies that there are no inconsistent DM components $G_0$ and $G_\infty$ of $G_{A-sF}$.
This is because if we choose a maximum matching $M$ with a size of $n$ in $G_{A-sF}$, then $V_0$ and $V_\infty$ in Steps 3--4 in Algorithm \ref{alg:dmdecomp} are both empty.

The following lemma describes the characterization of the consistent DM components of $G_{A-sF}$ and $G$.
\begin{lemma}
    \label{lem:inconsistent}
    Consider descriptor system (\ref{eq:descriptor}) which satisfies solvability condition (\ref{eq:solvability}). Then, the following conditions are equivalent\textcolor{black}{:}
    \begin{enumerate}
        \item A node $v \in X\cup V^-$ of a consistent DM component $G_i$ in $G_{A-sF}$ belongs to an inconsitent DM component $G_0$ in $G$.
        \item There is a directed path on $\tilde{G}_M$ from an input node $u\in U$ to $v \in X\cup V^-$ in Step 2 of Algorithm \ref{alg:dmdecomp} for $G$.
    \end{enumerate}
\end{lemma}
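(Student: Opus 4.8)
The plan is to exploit the freedom in choosing the maximum matching that builds the DM decomposition of $G$, together with the solvability hypothesis, so that the equivalence becomes essentially definitional. First I would note that since system (\ref{eq:descriptor}) satisfies (\ref{eq:solvability}), condition 1) of Proposition~\ref{prop:sc} gives $\nu(G_{A-sF}) = n$, so $G_{A-sF}$ admits a perfect matching $M$ saturating both $X$ and $V^-$ (recall $|X| = |V^-| = n$). In particular $G_{A-sF}$ has no inconsistent DM components, so every node $v \in X \cup V^-$ automatically lies in some consistent DM component $G_i$ of $G_{A-sF}$; hence the qualifier in condition 1 imposes no real restriction and may be read as ranging over all $v \in X \cup V^-$.

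Next I would regard this $M$ as a matching in $G$. Because $|V^-| = n$ forces $\nu(G) = n$ and $M$ already has size $n$, $M$ is a maximum matching of $G$ that saturates all of $V^-$ while leaving every input node unmatched. Since the DM decomposition does not depend on the initially chosen maximum matching (Lemma~2.3.35 in \cite{murota00}), I am free to use this particular $M$ in Step 1 of Algorithm~\ref{alg:dmdecomp} applied to $G$. With this choice the unmatched nodes of $V^+ = X \cup U$ are exactly the input nodes $U$, and—because every equation node is matched—one also has $V^- \setminus \partial^- M = \emptyset$, whence $V_\infty = \emptyset$, so no node of $X \cup V^-$ can fall in $G_\infty$.

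The decisive step is then Step 3 of Algorithm~\ref{alg:dmdecomp}: by definition $V_0 = \{\, v \in V^+ \cup V^- \mid \exists\, u \in V^+ \setminus \partial^+ M,\ u \to_{\tilde{G}_M} v \,\}$, and since $V^+ \setminus \partial^+ M = U$ under the chosen $M$, membership $v \in V_0$ is precisely reachability of $v$ from some input node along $\tilde{G}_M$. Restricting to $v \in X \cup V^-$, condition 1 (that $v$ lies in the inconsistent component $G_0$ of $G$) and condition 2 (that there is a directed path in $\tilde{G}_M$ from an input node $u \in U$ to $v$) therefore coincide, which establishes the equivalence.

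The only genuine obstacle here is conceptual rather than computational: one must recognize that aligning the matching of $G$ with a perfect matching of $G_{A-sF}$ forces the unmatched side of $V^+$ to be exactly $U$, and then invoke the matching-independence of the DM decomposition so that this convenient choice is legitimate for determining the (intrinsically matching-independent) component $G_0$. Once this alignment is secured, the claim reduces to unwinding the definition of $V_0$ in Step 3, and no further argument is required.
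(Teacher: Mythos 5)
Your proposal is correct and follows essentially the same route as the paper's proof: both choose a size-$n$ matching of $G_{A-sF}$ as the maximum matching of $G$, observe that $V^+\setminus\partial^+M=U$ and $V^-\setminus\partial^-M=\emptyset$, and read the equivalence off the definition of $V_0$ in Step 3 of Algorithm~\ref{alg:dmdecomp}. Your version is, if anything, slightly more explicit in invoking the matching-independence of the DM decomposition to justify the choice of $M$.
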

\begin{proof}
    From assumption (\ref{eq:solvability}), condition 1) in Proposition~\ref{prop:sc} holds.
    This means that we can find the same maximum matching $M$ with size of $n$ in Step 1 of Algorithm \ref{alg:dmdecomp} for $G_{A-sF}$ and $G$.
    Then, in Step 2 of Algorithm \ref{alg:dmdecomp}, $\tilde{G}_M$ of $G_{A-sF}$ is a directed subgraph of $\tilde{G}_M$ of $G$, as illustrated in Fig.~\ref{fig:dmdecompalgorithm}.
    Also, $V^+ = X\cup U$, $\partial^+ M = X$, $V^- = \partial^- M$. Thus, $V^+\setminus \partial^+ M$ and $V^-\setminus \partial^- M$ in Steps 3 and 4 of Algorithm \ref{alg:dmdecomp} coincide with $U$ and $\emptyset$, respectively. 
    That is, the node sets $V_0$ and $V_\infty$ of the inconsistent DM components in $G$ are as follows:
    \begin{align*}
        V_0 = U\cup \{v \in X\cup V^- \mid \exists u \in U\ u \to_{\tilde{G}_M} v \}, \ V_\infty = \emptyset.
    \end{align*}
    This implies that 1) and 2) are equivalent. \qed
\end{proof}

For instance, consider descriptor system (\ref{eq:descriptor}) with (\ref{eq:examplesystem}) and $B_1$ in (\ref{eq:exampleinputs}). The DM decomposition of the bipartite representation $G$ is depicted in Fig.~\ref{fig:exampledmwithinput}.
$G_0$ is the inconsistent DM component, $G_1$ is the consistent DM component, and $G_0 \preceq G_1$. That is, $G_1$ is the maximal consistent DM s-component.
Then, the consistent DM component $G_1$ in $G_{A-sF}$ (Fig~\ref{fig:exampledm}) is also consistent in $G$ (Fig.~\ref{fig:exampledmwithinput}), since there is no directed path from $u_1$ to $e_2$ or $x_1$ in $G$.
Note that in general, there is a directed path from $u\in U$ to $x\in X$,
because edges in a matching are undirected.

Lemma \ref{lem:inconsistent} gives another characterization of the structural controllability condition for system (\ref{eq:descriptor}) which satisfies solvability condition (\ref{eq:solvability}). This property will be used in the proof of Theorem \ref{thm:existence} in the next subsection.
\begin{corollary}
    \label{cor:sc2}
    Consider descriptor system (\ref{eq:descriptor}) which satisfies solvability condition (\ref{eq:solvability}). Then, descriptor system (\ref{eq:descriptor}) is structurally controllable \textit{if and only if}
    \begin{itemize}
        \item[2')] $\nu(G_{\left[A \mid B\right]}) = n$,
        \item[3')] For each maximal consistent DM s-component $G_i$ in $G_{A-sF}$, which is a subgraph of $G$, there is a directed path on $\tilde{G}_M$ from $U$ to $G_i$ in Step 2 of Algorithm \ref{alg:dmdecomp} for $G$.
    \end{itemize}
\end{corollary}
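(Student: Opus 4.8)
The plan is to invoke Proposition~\ref{prop:sc} and strip away the conditions that the solvability assumption~(\ref{eq:solvability}) has already settled. Under~(\ref{eq:solvability}), condition~1) of Proposition~\ref{prop:sc} holds automatically, and condition~2) is literally 2'). Hence it suffices to prove that, in the presence of 2'), condition~3) of Proposition~\ref{prop:sc} (no consistent DM component of $G$ contains s-arcs) is equivalent to 3'). Throughout I would use the fact, noted after Proposition~\ref{prop:sc}, that $G_{A-sF}$ has no inconsistent DM components, so every s-arc of $G_{A-sF}$ lies in some consistent DM component of $G_{A-sF}$.

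The first substantive step is to pin down the consistent DM components of $G$ in terms of those of $G_{A-sF}$. Running Algorithm~\ref{alg:dmdecomp} on $G$ with the size-$n$ matching $M$ used in the proof of Lemma~\ref{lem:inconsistent}, the input nodes $U = V^+\setminus\partial^+ M$ are sources of $\tilde{G}_M$, since their only incident edges are the $E_B$-edges, oriented out of $U$. Therefore adjoining $U$ and these edges to $\tilde{G}_M$ of $G_{A-sF}$ creates no new directed cycle among the old nodes, so the strongly connected components of $\tilde{G}_M$ of $G$ restricted to $X\cup V^-$ coincide with those of $\tilde{G}_M$ of $G_{A-sF}$, i.e.\ with the consistent DM components of $G_{A-sF}$. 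Because each such component is strongly connected, Lemma~\ref{lem:inconsistent} shows it lies \emph{entirely} in $V_0$ of $G$ when one (equivalently every) of its nodes is reachable from $U$ on $\tilde{G}_M$, and lies \emph{entirely} outside $V_0$ otherwise. As $V_\infty=\emptyset$, deleting $V_0$ removes whole components, so the consistent DM components of $G$ are exactly those consistent DM components of $G_{A-sF}$ that are \emph{not} reachable from $U$ on $\tilde{G}_M$ of $G$.

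Since the two endpoints of any s-arc contained in a consistent DM component of $G_{A-sF}$ are absorbed together under this all-or-nothing transfer, condition~3) is equivalent to the statement that every DM s-component of $G_{A-sF}$ is reachable from $U$ on $\tilde{G}_M$ of $G$. It then remains to replace ``every DM s-component'' by ``every maximal consistent DM s-component''. The implication 3)$\Rightarrow$3') is immediate, as the maximal DM s-components form a subset. For the converse, assume 3') and fix an arbitrary DM s-component $G_j$ of $G_{A-sF}$; finiteness of the set of DM s-components under $\preceq$ yields a maximal one $G_i$ with $G_j\preceq G_i$, that is, a directed path on $\tilde{G}_M$ from $G_i$ to $G_j$. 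By 3') there is a directed path from $U$ to $G_i$ on $\tilde{G}_M$ of $G$, and concatenating the two (both lie in $\tilde{G}_M$ of $G$, since $\tilde{G}_M$ of $G_{A-sF}$ is a subgraph of it) gives a directed path from $U$ to $G_j$. Hence every DM s-component is reachable from $U$, which is condition~3).

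I expect the main obstacle to be the second paragraph: establishing the precise correspondence between the consistent DM components of $G$ and the non-absorbed consistent DM components of $G_{A-sF}$, in particular that absorption into $V_0$ is all-or-nothing per component and that no components merge when $U$ is adjoined. Lemma~\ref{lem:inconsistent} supplies the node-level reachability characterization, so the real work is the bookkeeping that lifts it to the component level; once that correspondence is in hand, the reduction to maximal components via the partial order $\preceq$ is routine.
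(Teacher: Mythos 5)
Your proof is correct and follows essentially the same route as the paper's: reduce via Proposition~\ref{prop:sc} and Lemma~\ref{lem:inconsistent} to a reachability condition on the consistent DM s-components of $G_{A-sF}$, and then pass to the maximal ones using the partial order $\preceq$. The only difference is that you spell out the component-level correspondence between the DM decompositions of $G$ and $G_{A-sF}$ (the all-or-nothing absorption into $G_0$), which the paper asserts more tersely.
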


\begin{proof}
    From assumption (\ref{eq:solvability}), condition 1) in Proposition~\ref{prop:sc} holds.
    Condition 2') is equivalent to condition 2) in Proposition \ref{prop:sc}.
    Also, condition 3) in Proposition \ref{prop:sc} holds \textit{if and only if} all nodes $v\in X\cup V^-$ of consistent DM s-components $G_i$ in $G_{A-sF}$ belong to an inconsistent DM s-component $G_0$ in $G$, since consistent DM s-components contain s-arcs.
    Using Lemma \ref{lem:inconsistent}, this is equivalent to the following:
    \begin{itemize}
        \item[3*)] For each consistent DM s-component $G_i$ in $G_{A-sF}$, which is a subgraph of $G$, there is a directed path on $\tilde{G}_M$ from $U$ to $G_i$.
    \end{itemize}
    This condition is also equivalent to 3') in Corollary \ref{cor:sc2}, since the non-maximal consistent DM components in $G_{A-sF}$ have a directed path from the maximal consistent DM component in $G_{A-sF}$ from the definition of the partial order $\preceq$. \qed
\end{proof}

Consider descriptor system (\ref{eq:descriptor}) with \textcolor{black}{$A$ and $F$ given in} (\ref{eq:examplesystem}) and $B_1$ in (\ref{eq:exampleinputs}) again.
In $G_{A-sF}$, there is no directed path from $u_1$ to $G_1$ (Fig.~\ref{fig:exampledmwithinput}). 
This means condition 3') in Corollary \ref{cor:sc2} does not hold.
Thus, descriptor system (\ref{eq:descriptor}) with (\ref{eq:examplesystem}) and $B_1$ in (\ref{eq:exampleinputs}) is not structurally controllable.

\subsection{Existence of solution to Problem (\ref{prob:cmcp0})}
Unlike the traditional MCP0, it is not obvious whether or not an optimal solution exists for MCP0 with forbidden equations.
By using Proposition \ref{prop:sc} and Corollary \ref{cor:sc2}, we obtain the following theorem which \textcolor{black}{characterizes} the existence of solutions to Problem (\ref{prob:cmcp0}).
This theorem is employed to construct the optimal solution to Problem (\ref{prob:cmcp0}) in Section \ref{subsec:algforprobcmcp0}.
\begin{theorem}
\label{thm:existence}
\textcolor{black}{
Problem (\ref{prob:cmcp0}) has a solution if and only if both of the following conditions hold simultaneously:}
\begin{enumerate}
    \renewcommand{\labelenumi}{\alph{enumi}).}
    \item For each node set $V_i$ of maximal consistent DM s-components $G_i$ in $G_{A-sF}$, there exists $e\in V_i \cap V^-$ such that $e\not\in \mathcal{F}$.
    \item The graph-theoretic problem
    \begin{align}
        \label{prob:forbidden_matching}
        \begin{cases}
            \underset{\text{$M$}}{\text{maximize}} & |M| \\
            \text{subject to} & \text{$M$ is matching of $G_{A}$,} \\
            & \mathcal{F} \subseteq \partial^{-}M
        \end{cases}
    \end{align}
    is feasible.
\end{enumerate}
\end{theorem}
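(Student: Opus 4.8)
The plan is to reduce the statement to the controllability characterization in Corollary~\ref{cor:sc2}. Since solvability \eqref{eq:solvability} is assumed, a generic $B$ satisfying constraint II) of \eqref{prob:cmcp0} makes system \eqref{eq:descriptor} structurally controllable \emph{if and only if} it satisfies 2') $\nu(G_{[A\mid B]})=n$ and 3') every maximal consistent DM s-component of $G_{A-sF}$ is reached from $U$ by a directed path in $\tilde{G}_M$ for $G$. I would therefore show that \eqref{prob:cmcp0} is feasible iff such a $B$ exists, by matching condition (b) with the attainability of 2') and condition (a) with the attainability of 3'), and proving each implication separately.

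For the necessity, suppose a feasible $B$ exists. To obtain (b), fix a matching $M'$ of $G_{[A\mid B]}$ of size $n$, which exists by 2'); it saturates every node of $V^-$. Each $e\in\mathcal{F}$ cannot be incident to an edge of $E_B$ by constraint II), so $M'$ matches $e$ to a state through an edge of $E_A$. Hence the restriction of $M'$ to the edges incident to $\mathcal{F}$ is a matching of $G_A$ whose $\partial^-$-image contains $\mathcal{F}$, so \eqref{prob:forbidden_matching} is feasible. To obtain (a), fix a maximal consistent DM s-component $G_i$ of $G_{A-sF}$. By 3') there is a directed path in $\tilde{G}_M$ from some $u\in U$ to a node of $V_i$; its first edge leaves $u\in V^+$ and is thus an edge of $E_B$ pointing to an equation $e_0\in V^-$, forcing $e_0\notin\mathcal{F}$. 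The key step is to argue that $e_0\in V_i$: because $\partial^+M=X$ and $V^-=\partial^-M$, the proof of Lemma~\ref{lem:inconsistent} shows the nodes of $U$ are the only sources added in $\tilde{G}_M$ for $G$, and since $G_i$ is maximal no inter-component edge of $\tilde{G}_M$ enters $V_i$; consequently the path can reach $V_i$ only by entering it directly, so $e_0\in V_i\cap V^-$ and (a) holds.

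For the sufficiency, assume (a) and (b) and construct a feasible $B$. Let $M_0$ be an optimal matching for \eqref{prob:forbidden_matching}, so $\mathcal{F}\subseteq\partial^-M_0$ and $M_0\subseteq E_A$. For condition 2') I would add, for every equation $e\in V^-\setminus\partial^-M_0$ (necessarily $e\notin\mathcal{F}$), a dedicated input column of $B$ whose only nonzero entry is in row $e$; together with $M_0$ these edges form a perfect matching of $G_{[A\mid B]}$, giving $\nu(G_{[A\mid B]})=n$. For condition 3'), invoke (a) to pick in each maximal consistent DM s-component $G_i$ an equation $e_i^\ast\in V_i\cap V^-$ with $e_i^\ast\notin\mathcal{F}$, and include an input edge to each $e_i^\ast$; then $u\to e_i^\ast$ is a directed path from $U$ to $G_i$ in $\tilde{G}_M$, since $e_i^\ast\in V_i$. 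The resulting $B$ has its nonzero rows in $R\setminus\mathcal{F}$ and meets 2') and 3'), so by Corollary~\ref{cor:sc2} system \eqref{eq:descriptor} is structurally controllable and \eqref{prob:cmcp0} is feasible. Adding the extra inputs for 3') cannot decrease $\nu(G_{[A\mid B]})$, so the two conditions do not interfere.

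The hardest part is the necessity of (a): showing that a directed path from $U$ reaching a maximal consistent DM s-component $G_i$ must \emph{enter $V_i$ at a non-forbidden equation of $V_i$ itself}, rather than through some ancestor component. This rests on the structural fact that such a $G_i$ has no incoming inter-component edge in $\tilde{G}_M$ of $G_{A-sF}$ (it is a source of the partial order $\preceq$ among the components carrying s-arcs), combined with the observation, drawn from the proof of Lemma~\ref{lem:inconsistent}, that because $\partial^+M=X$ and $V^-=\partial^-M$ the only sources created in $G$ are the input nodes $U$. Establishing this source property cleanly, and confirming that it forces the input edge to attach inside $V_i$, is the crux; the matching arguments for (b) and for the sufficiency direction are comparatively routine.
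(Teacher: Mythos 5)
Your overall strategy is the same as the paper's: both directions are routed through Corollary~\ref{cor:sc2}, condition b) is extracted from a size-$n$ matching of $G_{[A\mid B]}$ by discarding the $E_B$ edges, and the sufficiency direction is the same two-step construction (one input per node of $V^-\setminus\partial^-M_0$, plus inputs into the maximal consistent DM s-components). The necessity of b) and the whole sufficiency argument are sound and essentially identical to the paper's.

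The problem is exactly at the step you yourself flag as the crux. You infer from the maximality of $G_i$ that ``no inter-component edge of $\tilde{G}_M$ enters $V_i$,'' so that a path from $U$ can only enter $V_i$ through an $E_B$ edge attached to an equation of $V_i$. But maximality of a consistent DM s-component is defined only relative to the \emph{other s-components}: a consistent DM component \emph{without} s-arcs may still be strictly greater than $G_i$ in $\preceq$, and then $V_i$ does receive an inter-component edge of $\tilde{G}_M$ from that ancestor. Concretely, take $n=2$, $F=\bigl[\begin{smallmatrix}0&0\\0&f_1\end{smallmatrix}\bigr]$, $A=\bigl[\begin{smallmatrix}a_1&0\\a_2&a_3\end{smallmatrix}\bigr]$. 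The DM components of $G_{A-sF}$ are $C_1=\{e_1,x_1\}$ (no s-arc) and $C_2=\{e_2,x_2\}$ (contains the s-arc $(e_2,x_2)$), with $C_2\preceq C_1$ via the edge $x_1\to e_2$; $C_2$ is the unique, hence maximal, consistent DM s-component. With $\mathcal{F}=\{e_2\}$ condition a) fails, yet $B=[\,b\ \ 0\,]^\top$ satisfies constraint II) and gives $\rank[A-zF\mid B]=2$ for every $z$ generically (the only case to check is $z=a_3/f_1$, where the rows $(a_1,0,b)$ and $(a_2,0,0)$ are independent), so Problem~\eqref{prob:cmcp0} is feasible: the input reaches $C_2$ through the ancestor $C_1$ rather than by attaching inside $V_2$. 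So the implication ``$G_i$ reachable from $U$ $\Rightarrow$ some $e\in V_i\cap V^-$ is directly wired to an input'' does not follow from the stated notion of maximality. To be fair, the paper's own proof asserts the same conclusion in a single sentence without justification, so you have correctly located the weak point; but the justification you supply does not close it, and the example indicates it cannot be closed as stated — one would need either maximality among \emph{all} consistent DM components, or a weakened condition a) requiring a non-forbidden equation in some component $G_j\succeq G_i$.
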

\begin{proof}
    We assume that $B\in \mathcal{G}^{n\times m}$ is \textcolor{black}{a} solution to Problem (\ref{prob:cmcp0}),
    \textcolor{black}{and prove that conditions a) and b) hold.
    Note that} this $B$ defines $E_B$ in (\ref{eq:defgraph}).
    Suppose that a maximal consistent DM s-component $G_i$ of $G_{A-sF}$, which is a subgraph of $G$, is not connected to $U$\textcolor{black}{,
    where $U$ denotes the set of input nodes.}
    This means that there is no directed path on $\tilde{G}_M$ for $G$ from $U$ to $G_i$ in Step 2 of Algorithm \ref{alg:dmdecomp}. 
    From Corollary \ref{cor:sc2}, this implies that system (\ref{eq:descriptor}) with \textcolor{black}{the} given $B$ is not structurally controllable, which contradicts the assumption.
    Thus, in $G_{A-sF}$, for the all node set $V_i$ of maximal consistent DM s-components $G_i$, there exists an equation node $e\in V^-\cap V_i$ which is connected to an input $u\in U$. This means that $e\not \in \mathcal{F}$ and condition a) holds.
    \textcolor{black}{Moreover,} 
     condition 2) in Proposition \ref{prop:sc} \textcolor{black}{implies that} the maximum matching $M^\ast$ of $G_{[A\mid B]} = (V^+,V^-\cup U; E\cup E_B)$ and $|M^\ast| = n$ exists.
    Then, $M^\ast\setminus E_B$ is a matching in $G_{A}$ and $\partial^- (M^\ast\setminus E_B)$ contains $\mathcal{F}$ since $\partial^- E_B \subseteq V^-\setminus\mathcal{F}$ (Fig.~\ref{fig:thm1_proofb}).
    Thus, $M^\ast\setminus E_B$ is a feasible solution to Problem (\ref{prob:forbidden_matching})\textcolor{black}{, and} condition b) holds.

    \textcolor{black}{Next, we assume that conditions a) and b) hold, and prove that Problem (\ref{prob:cmcp0}) has a solution.}
    Consider the input nodes $U$ with a size of $n-|M^\ast|$, where $M^\ast$ is the optimal solution to Problem (\ref{prob:forbidden_matching}).
    Then we can connect each equation node $e\in V^-\setminus \partial^- M^\ast \subseteq V^-\setminus \mathcal{F}$ to an input node.
    This means that the edge set $E_B$ in (\ref{eq:defgraph}) with size $|E_B| = n-|M^\ast|$ was constructed and forms a part of a matching of $G_{\left[A\mid B\right]}$.
    That is, $M^\ast\cup E_B$ is a maximum matching of $G_{\left[A\mid B\right]}$, and thus condition 2') in Corollary \ref{cor:sc2} holds. 
    Also, for each maximal consistent DM s-component $V_i$ of $G_{A-sF}$, we can connect an equation node $e\in V_i\setminus \mathcal{F}$ to an input from condition a).
    That is, all maximal consistent DM s-components of $G_{A-sF}$ have directed paths from the input node set $U$.
    This means that condition 3') in Corollary \ref{cor:sc2} holds.
    Thus, Problem (\ref{prob:cmcp0}) has a solution. \qed
    \end{proof}
    
    Theorem \ref{thm:existence} implies that the existence of an optimal solution to MCP0 with forbidden equations for system (\ref{eq:descriptor}) can be verified using pure graph theory.

    For instance, consider descriptor system (\ref{eq:descriptor}) with (\ref{eq:examplesystem}) depicted in Figs.~\ref{fig:example} and \ref{fig:exampledm}.
    \begin{enumerate}
        \item Consider the case $\mathcal{F} = \{e_1, e_3\}$. Then,
        \textcolor{black}{MCP0 with $\mathcal{F}$ is not feasible since condition b) in Theorem \ref{thm:existence} does not hold.}
        {In fact,}
        the maximal consistent DM s-component $G_1$ in $G_{A-sF}$ has a node $e_2$ which does not belong to $\mathcal{F}$.
    This means that condition a) in Theorem \ref{thm:existence} holds.
    However, since a set of nodes that connect to $e_1$ or $e_3$ is $\{x_1\}$ in $G_A$ in Fig.~\ref{fig:example}-(a), there is no matching $M$ of $G_A$ which satisfies $\mathcal{F}\subseteq \partial^-M$. Thus, Problem (\ref{prob:forbidden_matching}) is not feasible.
    
        \item Consider the case $\mathcal{F} = \{e_2\}$.
    Then, 
    \textcolor{black}{MCP0 with $\mathcal{F}$ is not feasible since condition a) does not hold. In fact,}
    we can choose a feasible solution for Problem \eqref{prob:forbidden_matching} as $M_{\mathcal{F}} := \{(e_2, x_1)\}$ which satisfies $\partial^- M_{\mathcal{F}} = \{e_2\}\supseteq \mathcal{F}$. Thus, condition b) holds. 
    However, a node of a maximal consistent DM s-component $G_1$ in $G_{A-sF}$ is only $e_2$ and $e_2\in \mathcal{F}$.
    \end{enumerate}
    \subsection{Optimal value for Problem (\ref{prob:cmcp0})}
    Using the solution to Problem (\ref{prob:forbidden_matching}), we can compute the optimal value for Problem (\ref{prob:cmcp0}).
    \begin{theorem}
        \label{thm:cmcp0}
        Suppose that an optimal solution to Problem (\ref{prob:cmcp0}) exists. If the optimal value of Problem (\ref{prob:forbidden_matching}) is $m^\ast$, then the minimum number of inputs that satisfy the constraints of Problem (\ref{prob:cmcp0}) is 
        \begin{align}
            \label{eq:mininput}
            n_D = \max\{n-m^\ast, 1\}.
        \end{align}
    \end{theorem}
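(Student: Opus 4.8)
The plan is to prove \eqref{eq:mininput} by establishing the two inequalities $n_D \ge \max\{n-m^\ast,1\}$ and $n_D \le \max\{n-m^\ast,1\}$ separately, reusing the matching‑extraction and matching‑construction arguments already developed in the proof of Theorem~\ref{thm:existence} together with the reachability reformulation of structural controllability in Corollary~\ref{cor:sc2}. The lower bound is a counting argument on an arbitrary feasible $B$; the upper bound is an explicit construction attaining the value.

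For the lower bound I would take any feasible $B\in\mathcal G^{n\times m}$ for Problem~\eqref{prob:cmcp0}. By condition~2) of Proposition~\ref{prop:sc} there is a perfect matching $M$ of $G_{[A\mid B]}$ with $|M|=n$. Splitting $M$ into its state part $M_A:=M\setminus E_B$ and its input part $M\cap E_B$, the set $M_A$ is a matching of $G_A$, and since at most $m$ input nodes are available it satisfies $|M_A|\ge n-m$. Because constraint II) forbids every $e\in\mathcal F$ from being adjacent to an input node, each forbidden equation is matched through $E_A$ in $M$, so $\mathcal F\subseteq\partial^- M_A$; hence $M_A$ is feasible for Problem~\eqref{prob:forbidden_matching} and $|M_A|\le m^\ast$. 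Combining the two gives $n-m\le m^\ast$, i.e. $m\ge n-m^\ast$. Moreover $m\ge 1$, since at least one column of $B$ is needed to meet the rank condition~\eqref{eq:rank} at a root of $\det(A-sF)$ (equivalently, to render every s‑component inconsistent in $G$); thus $m\ge\max\{n-m^\ast,1\}$ for every feasible $B$, which yields $n_D\ge\max\{n-m^\ast,1\}$.

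For the upper bound I would exhibit a feasible $B$ attaining the value. Let $M^\ast$ be an optimal solution of \eqref{prob:forbidden_matching}, so $|M^\ast|=m^\ast$. When $m^\ast<n$, I attach one fresh input node to each of the $n-m^\ast$ equations in $V^-\setminus\partial^- M^\ast$. Since $\partial^- M^\ast\supseteq\mathcal F$, these equations all lie in $R\setminus\mathcal F$, so constraint II) holds, and $M^\ast$ together with the new edges is a perfect matching of $G_{[A\mid B]}$, giving condition~2') of Corollary~\ref{cor:sc2}. The decisive point is condition~3'): every maximal consistent DM s‑component $G_i$ of $G_{A-sF}$ must be reached from $U$ along $\tilde G_M$. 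This is exactly the reachability established in the sufficiency part of Theorem~\ref{thm:existence}, so the constructed $B$ is structurally controllable with exactly $n-m^\ast=\max\{n-m^\ast,1\}$ inputs. When $m^\ast=n$ the matching $M^\ast$ already certifies condition~2'), and it remains to place a single input so that condition~3') holds; I attach it to a non‑forbidden equation furnished by condition~a) of Theorem~\ref{thm:existence} from which every maximal consistent DM s‑component is reachable, again producing a feasible $B$, now with $1=\max\{n-m^\ast,1\}$ input.

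The step I expect to be the crux is condition~3') in the upper bound: verifying that the inputs placed at the $M^\ast$‑exposed equations (or, when $m^\ast=n$, the single input) really induce a directed path on $\tilde G_M$ into each maximal consistent DM s‑component, so that no inputs beyond the matching deficiency $n-m^\ast$ are ever needed. The engine for this is the equivalence in Lemma~\ref{lem:inconsistent} and Corollary~\ref{cor:sc2}, which turn condition~3) of Proposition~\ref{prop:sc} into the purely graph‑theoretic reachability statement~3'); once that reachability is secured the counting is immediate, and the two inequalities together pin $n_D$ to the value in \eqref{eq:mininput}.
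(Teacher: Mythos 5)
Your overall two-inequality strategy is the same as the paper's, and your lower bound is correct and essentially identical to the paper's argument (the paper phrases it as a contradiction, you argue directly; the key observation in both is that $M\setminus E_B$ is feasible for Problem~\eqref{prob:forbidden_matching} because constraint~II) forces every forbidden equation to be matched through $E_A$, and that $|M\cap E_B|\le m$).

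The gap is in your upper bound for the case $m^\ast<n$. You attach one input to each equation of $V^-\setminus\partial^- M^\ast$ and then assert that condition~3') of Corollary~\ref{cor:sc2} ``is exactly the reachability established in the sufficiency part of Theorem~\ref{thm:existence}.'' It is not automatic: the $M^\ast$-exposed equations need not lie in, or lead into, any maximal consistent DM s-component of $G_{A-sF}$ (such a component can have all of its equation nodes covered by $M^\ast$), so the $B$ you have literally described can fail condition~3'). The missing step --- which is precisely step~2) of the paper's construction, and is also what makes Theorem~\ref{thm:existence}'s sufficiency proof go through --- is to add further edges from the \emph{already placed} input nodes to a non-forbidden equation node inside each maximal consistent DM s-component; such nodes exist by condition~a) of Theorem~\ref{thm:existence}, and since a column of a generic $B$ may have several nonzero entries this costs no additional input nodes and preserves constraint~II). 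You in fact perform exactly this wiring in your $m^\ast=n$ case (modulo the slip of asking for a single equation from which \emph{every} maximal component is reachable, rather than one such equation per component), so the fix is simply to apply the same step uniformly. You correctly flagged this reachability as the crux, but you did not supply the argument that closes it.
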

    \begin{proof}
    \textcolor{black}{We assume that} $B$ is an optimal solution \textcolor{black}{with
     $n_D$ column size} to Problem (\ref{prob:cmcp0}),
     \textcolor{black}{and prove that
     $n_D \geq n - m^\ast$, where
     $m^*\leq n$ by the definition.
     To this end, we assume}
        $n_D < n-m^\ast$. 
    From condition 2) in Proposition \ref{prop:sc}, $\nu(G_{\left[A \mid B\right]}) = n$ holds.
    Let $M$ be a maximum matching of $G_{\left[A\mid B\right]}$. Then, $M' := M\setminus E_B$ is a feasible solution to Problem (\ref{prob:forbidden_matching}) and
        $|M'| \geq n - n_D$. 
    Thus, we obtain $|M'| > m^\ast$. This contradicts the maximality of $m^\ast$.

    \textcolor{black}{If $n_D=0$, the corresponding system of the form \eqref{eq:descriptor} does not satisfy condition I) of Problem (\ref{prob:cmcp0}).
    Thus, $n_D \geq \max\{n-m^\ast, 1\}$.}
    
    \textcolor{black}{To prove that the equality holds, that is, 
    \eqref{eq:mininput} holds,
    we assume that} $M_{\mathcal{F}}^\ast$ is an optimal solution to Problem (\ref{prob:forbidden_matching}) \textcolor{black}{such that $|M_{\mathcal{F}}^\ast|=m^\ast$, and construct a system that satisfies the constraints of Problem (\ref{prob:cmcp0}) as follows.}
\begin{enumerate}
    \item Connect each input node to a node of $V^{-} \setminus \partial^{-} M_{\mathcal{F}}^\ast$. \textcolor{black}{Then,} $\nu(G_{\left[A \mid B \right]}) = n$ is satisfied and \textit{i.e.}, \textcolor{black}{condition 2') in Corollary~\ref{cor:sc2}} holds. 

    \item Connect the input nodes to \textcolor{black}{all} maximal consistent DM s-components of $G_{A-sF}$. Then \textcolor{black}{condition 3') in Corollary~\ref{cor:sc2}} is satisfied without increasing the number of input nodes.  
\end{enumerate}

    \textcolor{black}{A system constructed by 1) and 2) satisfies condition I) of Problem \eqref{prob:cmcp0}.
    Moreover, a system constructed by 1)  satisfies condition II), which is equivalent to that $\partial^- E_B \subseteq V^- \setminus \mathcal{F}$.
    This is because $\mathcal{F} \subseteq \partial^- M_{\mathcal{F}}^\ast$.
    Note that in 2), we can choose nodes so that condition II) is satisfied, because
    condition a) in Theorem \ref{thm:existence} implies that all maximal consistent DM s-component has at least one node that is not in $\mathcal{F}$.
    } 
    Therefore, this system \textcolor{black}{of the form} \eqref{eq:descriptor} \textcolor{black}{satisfies the constraints of Problem \eqref{prob:cmcp0} and $n_D$ is given by \eqref{eq:mininput}.}
    \qed
\end{proof}

If the set $\mathcal{F}$ of forbidden equations is empty, Eq. (\ref{eq:mininput}) can be written as $n_D = \max\{n-\nu(G_A), 1\}$.
This result is consistent with the result of the standard MCP0 for system (\ref{eq:descriptor}) \cite{terasaki2021minimal}.

The argument in the proof of Theorem \ref{thm:cmcp0} will be used to develop an algorithm for Problem (\ref{prob:cmcp0}) in Section \ref{subsec:algforprobcmcp0}.

\subsection{Algorithm for Problem (\ref{prob:forbidden_matching})}

Algorithm \ref{alg:forbidden_matching} describes an algorithm for solving Problem (\ref{prob:forbidden_matching}) based on an alternating path algorithm \cite{korte2012combinatorial}.
Steps 1--4 check the feasibility of solving Problem (\ref{prob:forbidden_matching}).
This is because if there is a feasible solution $M$ to Problem (\ref{prob:forbidden_matching}), by restricting the matching nodes in $\partial^- M$ to $\mathcal{F}\cap \partial^- M$, a matching $M_{\mathcal{F}}$ of $G_{\mathcal{F}} = (V^+,\mathcal{F}; E_A)$ is obtained that satisfies $|\mathcal{F}|=|M_{\mathcal{F}}|$ (Fig.~\ref{fig:alg2proof}). 
The converse is shown to hold by the following alternating path algorithm in Steps 5--9, as shown in the proof of Theorem \ref{thm:algvalid}.

\begin{figure}
    \centering
    \includegraphics[width=7.5cm]{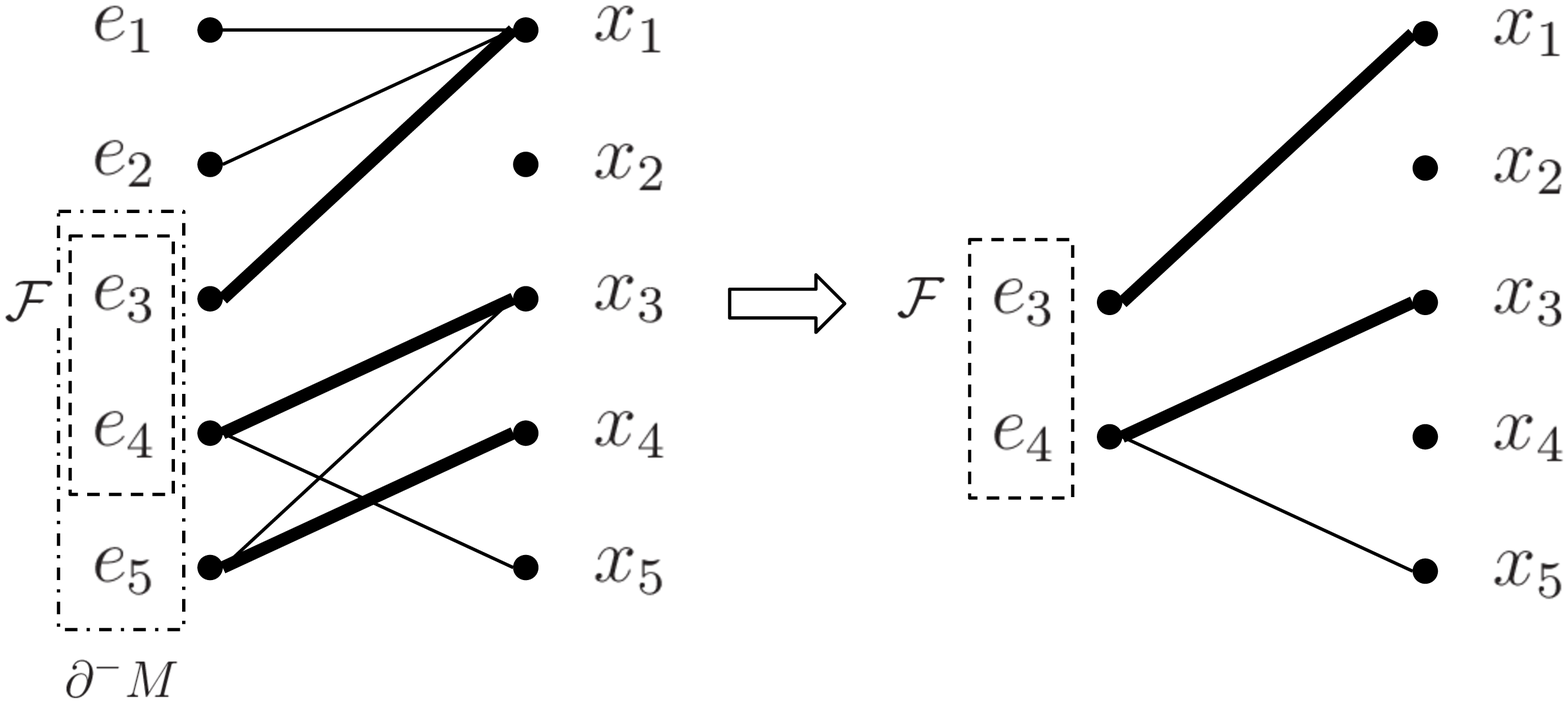}
    \caption{$G_A$ and $G_{\mathcal{F}}=(V^+, \mathcal{F}; E_A)$. The bold edges on the left are a feasible solution $M$ to Problem (\ref{prob:forbidden_matching}) while the right is a matching $M_{\mathcal{F}}$ of $G_{\mathcal{F}}$ which satisfies $|\mathcal{F}| = |M_{\mathcal{F}}|$.}
    \label{fig:alg2proof}
\end{figure}

The alternating path algorithm that is used in Steps 5--9 is explained as follows: Let $M^\ast$ be a maximum matching of $G_A$ which does not satisfy the condition $\mathcal{F}\subseteq \partial^-M^\ast$ in (\ref{prob:forbidden_matching}). That is, $M^\ast$ is not a feasible solution to Problem (\ref{prob:forbidden_matching}).
Note that $M^\ast$ can be computed by the Hopcraft-Karp algorithm \cite{korte2012combinatorial}.
Then, $v^-\in \mathcal{F}\setminus\partial^{-}M^\ast$ exists. Suppose a path 
\begin{align*}
    P & = p_1 \not\in M^\ast \to p_2 \in M^\ast \to\cdots \\
      & \to p_{2l-1}\not\in M^\ast \to p_{2l} \in M^\ast, \quad l=1,2,\dots,
\end{align*}
exists, which starts with $v^-$ and ends with $v_t^-\in V^-\setminus\mathcal{F}$.
Then, \textcolor{black}{a new maximum matching $M'$ is defined by the symmetric difference $(M^*\setminus P)\cup (P\setminus M^*)$ between $M^\ast$ and $P$.}
Moreover, the number of $\mathcal{F}\setminus\partial^{-}M'$ is just one less than the number of $\mathcal{F}\setminus\partial^{-}M^\ast$ since $\partial^{-}M' = (\partial^{-}M^\ast\setminus\{v_t^-\})\cup\{v^-\}$. This procedure is illustrated in Fig.~\ref{fig:alternatingpath2}.
By repeating this process, we can finally have a maximum matching $M_{\mathcal{F}}^\ast$ of $G_A$ which satisfies $\mathcal{F}\subseteq \partial^{-}M_{\mathcal{F}}^\ast$. 
Note that \textcolor{black}{
    from the construction of the alternating path, no paths that start at different vertices $v^-\in\mathcal{F}\setminus\partial^-M^\ast$ share edges with each other. This means that all desired alternating paths can be found by visiting each edge only once. Thus, we can execute Steps 6--9 by the breadth-first search algorithm with time complexity of $O(|E|+|V|)$ \cite{korte2012combinatorial}.
}
\begin{figure}
    \centering
    \includegraphics[width=7.7cm]{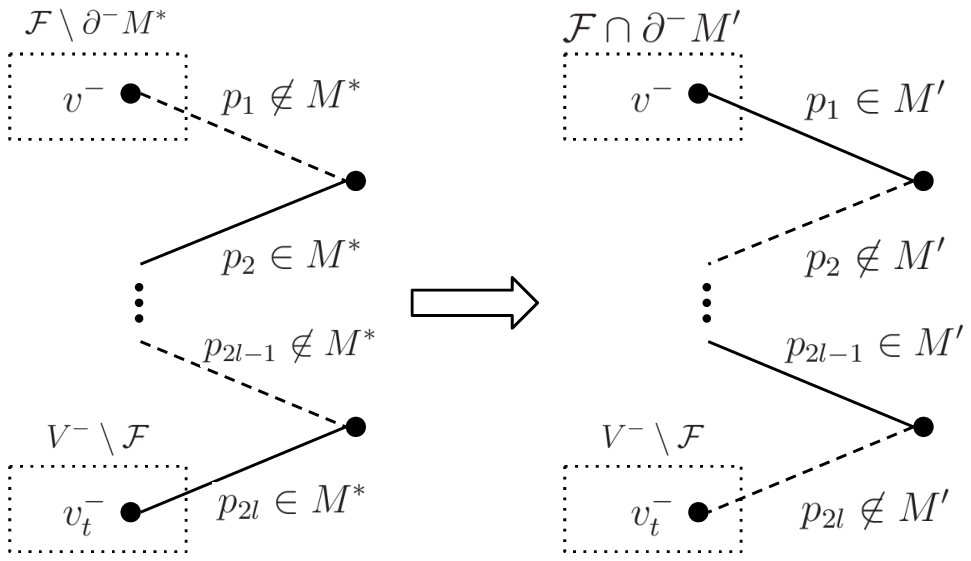}
    \caption{Alternating path algorithm.}
    \label{fig:alternatingpath2}
\end{figure}

\begin{figure}[!t]
  \begin{algorithm}[H]
    \caption{Algorithm for solving Problem (\ref{prob:forbidden_matching})}
    \label{alg:forbidden_matching}
    \begin{algorithmic}[1]
        \State Find the maximum matching $M^\ast_\mathcal{F}$ of the graph $G_\mathcal{F} := (V^+, \mathcal{F}; E_A)$.
        \If{$|\mathcal{F}| > |M^\ast_\mathcal{F}|$}
            \State Problem (\ref{prob:forbidden_matching}) is infeasible.
        \EndIf
        \State Find the maximum matching $M^\ast$ of $G_A$.
        \For{$v^-\in \mathcal{F}\setminus\partial^{-}M^\ast$}
            \State Find an alternating path \textcolor{black}{$P=\{p_1, \cdots, p_{2l}\}$} which starts with $v^-$ and ends with $v_t^{-}\in V^{-}\setminus \mathcal{F}$
            \State \textcolor{black}{$M^\ast := M^\ast\cup\{p_1,p_3,\cdots,p_{2l-1}\}\setminus \{p_2,p_4,\cdots,p_{2l}\}$}     
        \EndFor
        \State Output $M^\ast$.
    \end{algorithmic}
  \end{algorithm}
\end{figure}

\begin{figure}[!t]
  \begin{algorithm}[H]
    \caption{Algorithm for solving Problem (\ref{prob:cmcp0})}
    \label{alg:cmcp0}
    \begin{algorithmic}[1]
        \If{condition a) in Theorem \ref{thm:existence} is not satisfied}
            \State Problem (\ref{prob:cmcp0}) is infeasible.
        \EndIf
        \State Run Algorithm \ref{alg:forbidden_matching} and let $M^\ast$ be its output that is an optimal solution to Problem (\ref{prob:forbidden_matching}).
        \State Connect inputs $U := \{u_1,\dots, u_{n_D}\}$ to each equation node of $V^-\setminus \partial^- M^\ast$, where $n_D$ is (\ref{eq:mininput}) in Theorem \ref{thm:cmcp0}.
        \State Connect inputs $U$ to an arbitrary node of each maximal consistent DM s-components of $G_{A-sF}$, which does not belong to $\mathcal{F}$.
    \end{algorithmic}
  \end{algorithm}
\end{figure}

\begin{theorem}
    \label{thm:algvalid}
    If Problem (\ref{prob:forbidden_matching}) is feasible, Algorithm \ref{alg:forbidden_matching} outputs the optimal solution of Problem (\ref{prob:forbidden_matching}).
\end{theorem}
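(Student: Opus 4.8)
The plan is to show that the matching $M^\ast$ returned in Step 9 is a \emph{maximum} matching of $G_A$ (so that $|M^\ast| = \nu(G_A)$) which additionally satisfies $\mathcal{F}\subseteq\partial^- M^\ast$. Once this is established, optimality is immediate: every feasible solution of Problem (\ref{prob:forbidden_matching}) is in particular a matching of $G_A$, hence has size at most $\nu(G_A)$, and $M^\ast$ attains this bound while respecting the constraint $\mathcal{F}\subseteq\partial^- M^\ast$. Throughout I fix a feasible solution $N$, i.e. a matching of $G_A$ with $\mathcal{F}\subseteq\partial^- N$, which exists by the feasibility hypothesis (this is also what the check in Steps 1--3 certifies, since a matching of $G_A$ covering $\mathcal{F}$ exists if and only if $\nu(G_\mathcal{F})=|\mathcal{F}|$).

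First I would establish the loop invariant that after every pass of Steps 5--8 the set $M^\ast$ is still a maximum matching of $G_A$. The alternating path $P=\{p_1,\dots,p_{2l}\}$ of Step 6 contains exactly $l$ edges in $M^\ast$ (the even-indexed ones) and $l$ edges outside $M^\ast$ (the odd-indexed ones), so the symmetric-difference update in Step 7 neither increases nor decreases the cardinality; since $M^\ast$ starts out maximum in Step 4 with $|M^\ast|=\nu(G_A)$, it remains maximum. A brief verification that $P$ is a simple alternating path confirms that the update returns a genuine matching.

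The hard part is to prove that an alternating path of the form required in Step 6 always exists whenever $\mathcal{F}\setminus\partial^- M^\ast\neq\emptyset$; this is the crux of the argument. Given an uncovered forbidden node $v^-\in\mathcal{F}\setminus\partial^- M^\ast$, I would analyse the symmetric difference $M^\ast\triangle N$, which decomposes into vertex-disjoint alternating paths and even cycles. Because $v^-$ is covered by $N$ but not by $M^\ast$, it is an endpoint of one such path $Q$, whose first edge is an $N$-edge and therefore lies outside $M^\ast$. Since $M^\ast$ is maximum, $Q$ cannot be $M^\ast$-augmenting; a parity argument then forces $Q$ to have even length and to terminate with an $M^\ast$-edge at a node $w$ that is covered by $M^\ast$ but not by $N$. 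As $N$ saturates $\mathcal{F}$, such a $w$ cannot lie in $\mathcal{F}$, and bipartite parity places $w\in V^-$. Hence $Q$ is precisely an alternating path from $v^-\in\mathcal{F}$ to some $w\in V^-\setminus\mathcal{F}$ of the type demanded in Step 6, so the breadth-first search is guaranteed to find one (and any path of this form serves equally well for the update).

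Finally I would track the coverage of forbidden equations. The swap along $P$ alters the cover status of only its two endpoints: it makes $v^-$ covered and makes the terminal node $v_t^-\in V^-\setminus\mathcal{F}$ uncovered, while every intermediate $V^-$ node merely exchanges one incident matching edge for another and so stays covered. Consequently each iteration decreases $|\mathcal{F}\setminus\partial^- M^\ast|$ by exactly one and never removes a previously covered forbidden node, so the loop terminates after finitely many steps with $\mathcal{F}\subseteq\partial^- M^\ast$. Combining this with the loop invariant, the output is a maximum matching of $G_A$ saturating $\mathcal{F}$, which is therefore an optimal solution of Problem (\ref{prob:forbidden_matching}).
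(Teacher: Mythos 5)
Your proof is correct, and it reaches the same algorithmic conclusions as the paper (maximality is preserved by each swap, only the two endpoints of $P$ change cover status, so $|\mathcal{F}\setminus\partial^-M^\ast|$ drops by one per iteration), but the crux --- existence of the required alternating path --- is argued by a genuinely different route. The paper takes as its certificate the maximum matching $M_{\mathcal{F}}^\ast$ of $G_{\mathcal{F}}=(V^+,\mathcal{F};E_A)$ produced in Steps 1--4 and builds the path edge by edge, alternating between $M_{\mathcal{F}}^\ast\setminus M^\ast$ and $M^\ast$, ruling out a dead end in $V^+$ by a swap-and-contradict argument against the maximality of $M^\ast$, and then inductively extending whenever the path lands on a node of $\mathcal{F}$ until it exits into $V^-\setminus\mathcal{F}$. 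You instead fix an arbitrary feasible solution $N$ and invoke the standard decomposition of $M^\ast\triangle N$ into vertex-disjoint alternating paths and even cycles: the component containing the uncovered node $v^-$ must be a non-augmenting path, hence of even length, ending at a node $w$ that $N$ leaves exposed, so $w\in V^-\setminus\mathcal{F}$ by the saturation of $\mathcal{F}$ and bipartite parity. Your version buys a shorter, more standard existence argument that avoids the paper's delicate inductive extension, and you also make explicit two points the paper leaves implicit: the loop invariant that $M^\ast$ stays maximum, and the final optimality step (any feasible matching has size at most $\nu(G_A)$, which the output attains). The paper's version, in exchange, ties the existence proof directly to the object $M^\ast_{\mathcal{F}}$ that Algorithm~\ref{alg:forbidden_matching} actually computes in its feasibility check. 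Both are sound.
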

\begin{proof}
    Let $M^\ast$ be a maximum matching of $G_A$.
    From the feasibility assumption of Problem (\ref{prob:forbidden_matching}), there exists a maximum matching $M_{\mathcal{F}}^\ast$ of the bipartite graph $(V^+,V^-\cap \mathcal{F}; E_A)$ by Steps 1--4 in Algorithm \ref{alg:forbidden_matching}.
    Because of the construction of Algorithm \ref{alg:forbidden_matching}, it is sufficient to show that there is an alternating path $P$ for each $v^-\in\mathcal{F}\setminus\partial^-M^\ast$. 
    \textcolor{black}{
    In particular, we show that such $P$ alternates between edges of $M^\ast$ and $M_{\mathcal{F}}^\ast\setminus M^\ast$.
    }
    \textcolor{black}{
    From the definition of $M_{\mathcal{F}}^\ast$,} an edge $p\in M_{\mathcal{F}}^\ast$ exists that connects to $v^-$. 
    \textcolor{black}{
    Also, there exists $p'\in M^\ast$ that $\partial^+p' = \partial^+ p$. This is because otherwise $M^\ast \cup\{p\}$ is a matching, whose size is larger than $M^\ast$ and this contradicts the maximality of $M^\ast$.
    Thus, there is a path with alternating the edges of $M^\ast$ and $M_{\mathcal{F}}^\ast\setminus M^\ast$ with a length of at least 2.
    }
    To show that all alternating paths end with a node $v_t^-$ of $V^-$, not of $V^+$, 
    suppose that an alternating path $P$ exists that ends with $\tilde{v}^+\in V^+$.
    We also assume that $P$ alternates between the edges of $M^\ast$ and \textcolor{black}{$M_{\mathcal{F}}^\ast\setminus M^\ast$}.
    In this case, we have $|M_{\mathcal{F}}^\ast \cap P| = |M^\ast\cap P|+1$, 
    since $P$ must start with $p_1\in M_{\mathcal{F}}^\ast$ and end with $p_{2l-1}\in M_{\mathcal{F}}^\ast$ (Fig. \ref{fig:alternatingpath}).
    Thus, by replacing the $M^\ast \cap P$ edges with $M_{\mathcal{F}}^\ast \cap P$ edges, we can obtain a matching larger than $M^\ast$ in size. This contradicts the maximality of $M^\ast$. Thus, $p_{2l}\in M^\ast$ exists and $P\cup\{p_{2l}\}$ is an alternating path.
    That is, $p_{2l}$ connects $v_t^-\in V^-$.
    \textcolor{black}{
    If $v_t^-\in V^-\cap \mathcal{F}$, there is an edge $p' \in M_{\mathcal{F}}^\ast\setminus M^\ast$ that satisfies $v_t^- = \partial^- p'$, and $P\cup\{p_{2l}\}\cup \{p'\}$ is a new alternating path between the edges of $M^\ast$ and $M_{\mathcal{F}}^\ast\setminus M^\ast$.
    Thus, by inductively applying the above argument, we finally have an alternating path $P$ which ends with a node of $V^-\setminus\mathcal{F}$.
    }
    \qed
\end{proof}
\begin{figure}
    \centering
    \includegraphics[width=3.5cm]{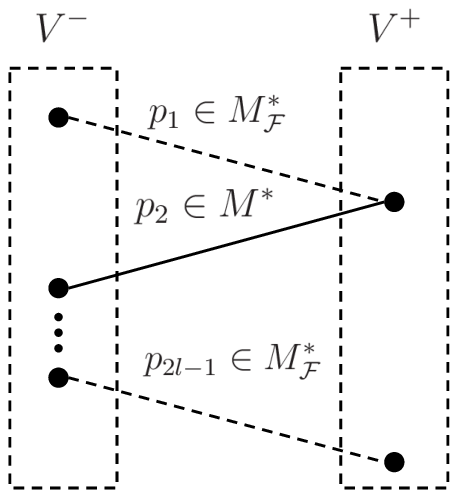}
    \caption{\textcolor{black}{Alternating path $P = \{p_1,p_2\dots, p_{2l-1}\}$. $p_1, p_3, \dots, p_{2l-1}$ are in $M_{\mathcal{F}}^\ast$, and $p_2, p_4, \dots, p_{2l-2}$ are in $M^\ast$.}}
    \label{fig:alternatingpath}
\end{figure}

%
\textcolor{black}{
To explain how Algorithm \ref{alg:forbidden_matching} works, consider Problem \eqref{prob:forbidden_matching} with $G_A$ of descriptor system (\ref{eq:examplesystem}) and forbidden equations $\mathcal{F} = \{e_3, e_4\}$.
We can choose $M_{\mathcal{F}}^\ast$ in Step 1 of Algorithm \ref{alg:forbidden_matching} as $M_{\mathcal{F}}^\ast = \{(e_3, x_1), (e_4, x_3)\}$.
In Step 5, we have the maximum matching $M^\ast = \{(e_3, x_1), (e_4, x_3), (e_5, x_4)\}$.
Moreover, Step 6--9 illustrated in Fig.~\ref{fig:example_alternatingpath} produces the new maximum matching $M' = \{(e_3, x_1), (e_4, x_3), (e_5, x_4)\}$ that satisfies $\mathcal{F}\subseteq \partial^- M' = \{e_3, e_4, e_5\}$ in \eqref{prob:forbidden_matching}.
In fact, we can find an alternating path $P = \{(e_1, x_1), (e_3, x_1)\}$ and obtain the new matching $M'$.
}

\begin{figure}
    \centering
    \includegraphics[width=8cm]{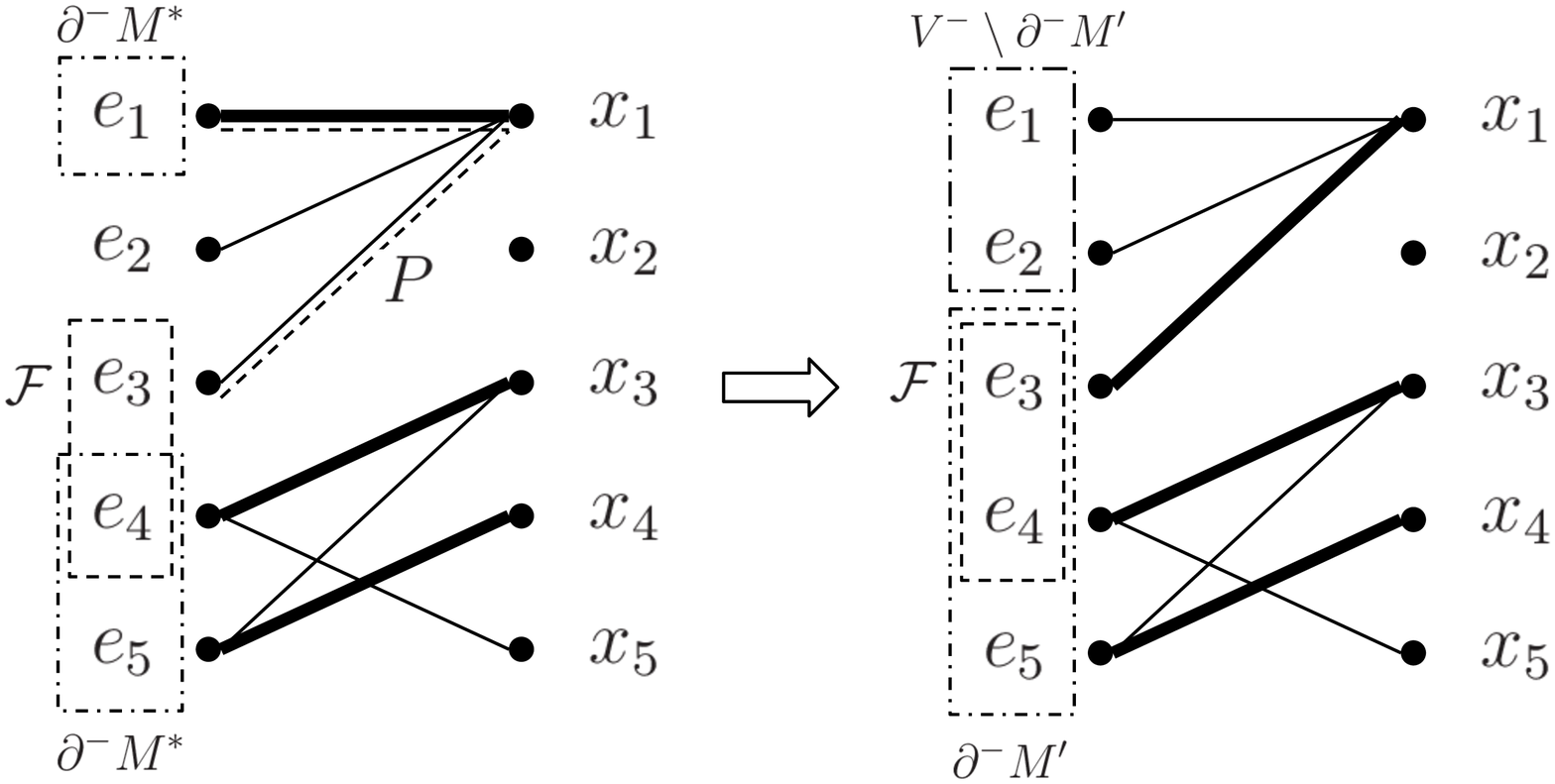}
    \caption{\textcolor{black}{Example of Step 6--9 in Algorithm \ref{alg:forbidden_matching}}. The bold edges represent $M^\ast$ (left) and $M'$ (right).}
    \label{fig:example_alternatingpath}
\end{figure}

\subsection{Algorithm for Problem (\ref{prob:cmcp0})}
\label{subsec:algforprobcmcp0}
Algorithm \ref{alg:cmcp0} shows an algorithm for solving Problem (\ref{prob:cmcp0}) and has the following property.

\begin{theorem}
    \label{thm:compcmcp0}
    \textcolor{black}{If Problem (\ref{prob:cmcp0}) has a feasible solution, then}
    Algorithm \ref{alg:cmcp0} outputs the optimal solution to Problem (\ref{prob:cmcp0}). 
    \textcolor{black}{If Problem (\ref{prob:cmcp0}) is infeasible, Algorithm \ref{alg:cmcp0} determines the infeasibility.}
    Furthermore, the time complexity is $O(|V|+|E|\sqrt{|V|})$.
\end{theorem}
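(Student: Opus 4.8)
The plan is to verify the three assertions separately, leaning heavily on Theorems \ref{thm:existence} and \ref{thm:cmcp0} and on the correctness of Algorithm \ref{alg:forbidden_matching} established in Theorem \ref{thm:algvalid}. I would first treat the infeasibility detection. By Theorem \ref{thm:existence}, Problem (\ref{prob:cmcp0}) is feasible if and only if conditions a) and b) both hold. Step 1 of Algorithm \ref{alg:cmcp0} directly tests condition a) and reports infeasibility when it fails. If condition a) holds but condition b) fails---that is, Problem (\ref{prob:forbidden_matching}) is infeasible---then the call to Algorithm \ref{alg:forbidden_matching} in Step 3 reports infeasibility in its Steps 2--4, since the test $|\mathcal{F}| \le |M^\ast_{\mathcal{F}}|$ fails. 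Hence the algorithm declares infeasibility exactly when one of the two conditions is violated, which by Theorem \ref{thm:existence} is precisely when Problem (\ref{prob:cmcp0}) is infeasible.

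Next, for optimality in the feasible case, I would argue that Steps 4 and 5 realize the construction used in the proof of Theorem \ref{thm:cmcp0}. By Theorem \ref{thm:algvalid}, the $M^\ast$ returned in Step 3 is an optimal solution to Problem (\ref{prob:forbidden_matching}) with $|M^\ast| = m^\ast$ and $\mathcal{F} \subseteq \partial^- M^\ast$. Step 4 then connects $n_D = \max\{n - m^\ast, 1\}$ inputs to the equation nodes of $V^- \setminus \partial^- M^\ast \subseteq V^- \setminus \mathcal{F}$, which, as in the proof of Theorem \ref{thm:cmcp0}, yields $\nu(G_{[A\mid B]}) = n$, i.e.\ condition 2') of Corollary \ref{cor:sc2}, while respecting constraint II). Step 5 connects inputs to a non-forbidden node of each maximal consistent DM s-component of $G_{A-sF}$---which exists by condition a) of Theorem \ref{thm:existence}---thereby securing condition 3') of Corollary \ref{cor:sc2} without adding inputs. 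The resulting $B$ therefore satisfies conditions I) and II) of Problem (\ref{prob:cmcp0}) and uses exactly $n_D$ inputs, which equals the optimal value by Theorem \ref{thm:cmcp0}; the output is thus optimal.

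For the complexity bound, I would account for each step. Testing condition a) in Step 1 requires the DM decomposition of $G_{A-sF}$ and the identification of its maximal consistent DM s-components, dominated by a maximum matching computation and thus costing $O(|E|\sqrt{|V|})$. Step 3 runs Algorithm \ref{alg:forbidden_matching}, whose two maximum-matching computations cost $O(|E|\sqrt{|V|})$ and whose alternating-path phase costs $O(|V|+|E|)$ by the breadth-first-search argument preceding it. Steps 4 and 5 merely scan equation nodes and DM components to attach inputs, which is $O(|V|+|E|)$. Summing these contributions gives a total of $O(|V| + |E|\sqrt{|V|})$.

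The main obstacle I anticipate is the complexity accounting rather than the correctness, since correctness is essentially an assembly of the already-established Theorems \ref{thm:existence}, \ref{thm:cmcp0}, and \ref{thm:algvalid}. In particular, I would need to confirm that verifying condition a)---locating every maximal consistent DM s-component and checking that it contains a non-forbidden equation node---can indeed be folded into a single DM decomposition, so that Step 1 does not exceed the $O(|E|\sqrt{|V|})$ budget, and that the input attachment in Step 5 can be carried out in linear time by reusing the DM decomposition already computed in Step 1.
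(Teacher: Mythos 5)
Your proposal is correct and follows essentially the same route as the paper's proof: infeasibility detection via conditions a) and b) of Theorem \ref{thm:existence}, optimality by realizing the construction from the proof of Theorem \ref{thm:cmcp0} with the matching returned by Algorithm \ref{alg:forbidden_matching} (Theorem \ref{thm:algvalid}), and the complexity bound by summing the DM-decomposition, Hopcroft--Karp, and breadth-first-search costs. The only discrepancy is a harmless off-by-one in your step numbering of Algorithm \ref{alg:cmcp0} (the feasibility check occupies Steps 1--3, Algorithm \ref{alg:forbidden_matching} is invoked in Step 4, and the input connections are Steps 5--6).
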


\begin{proof}
Algorithm \ref{alg:cmcp0} is based on the argument in the proof of Theorem \ref{thm:cmcp0}.
Steps 1--4 determine whether or not Problem (\ref{prob:cmcp0}) is feasible, and Steps 1--3 are computed by Algorithm \ref{alg:dmdecomp}. 
Step 4 checks condition b) in Theorem \ref{thm:existence} and output an optimal solution $M^\ast$ of Problem (\ref{prob:forbidden_matching}).
\textcolor{black}{Steps 5 and 6 in Algorithm \ref{alg:cmcp0} output the optimal solution to Problem \eqref{prob:cmcp0}, as shown in the proof of Theorem \ref{thm:cmcp0}.}

\textcolor{black}{
    Next, we show that the time complexity is $O(|V|+|E|\sqrt{|V|})$.
    Step 1--3 in Algorithm \ref{alg:cmcp0} can be checked by Algorithm \ref{alg:dmdecomp}, its time complexity is $O(|E|\sqrt{|V|})$.
    The time complexity of Step 4 is $O(|V|+|E|\sqrt{|V|})$.
    In fact, in Steps 1--5 in Algorithm \ref{alg:forbidden_matching} are computed by using the Hopcroft-Karp algorithm\cite{korte2012combinatorial}, the time complexity is $O(|E|\sqrt{|V|})$. Moreover, Steps 6--9 in Algorithm \ref{alg:forbidden_matching} can be computed by breadth-first search algorithm in $O(|V|+|E|)$, as mentioned already.
    Steps 5--6 in Algorithm \ref{alg:cmcp0} can be computed in $O(|V|)$ at most.
    Therefore, the time complexity of Algorithm \ref{alg:forbidden_matching} is $O(|V|+|E|\sqrt{|V|})$. \qed
}
\end{proof}

\textcolor{black}{
Theorem \ref{thm:compcmcp0} means that more general problems than those of \cite{liu2011controllability, terasaki2021minimal}  can be solved with the same computational complexity.
In fact,
the problem addressed by \cite{liu2011controllability} is a special case of Problem \eqref{prob:cmcp0} with $\mathcal{F}=\emptyset$ and $F=I_n$, and it can be solved in $O(|V|+|E|\sqrt{|V|})$.
Moreover, the algorithm proposed in \cite{terasaki2021minimal} deals with Problem \eqref{prob:cmcp0} with $\mathcal{F} = \emptyset$, and the time complexity  is $O(|V|+|E|\sqrt{|V|})$.
}

To illustrate how Algorithm \ref{alg:cmcp0} works, consider descriptor system (\ref{eq:descriptor}) with (\ref{eq:examplesystem}) and forbidden equations $\mathcal{F} = \{e_3, e_4\}$.
In this case, Steps 1--4 determine that Problem (\ref{prob:cmcp0}) is feasible. In fact, the maximal consistent DM s-component $G_1$ in Fig.~\ref{fig:exampledm} has a node $e_2$ which does not belong to $\mathcal{F}$.
\textcolor{black}{
Also, in Step 4, we obtain $M^\ast = \{(e_3, x_1), (e_4, x_3), (e_5, x_4)\}$ from the discussion of how Algorithm \ref{alg:forbidden_matching} works in IV-C.
Thus, Problem (\ref{prob:cmcp0}) is feasible, since conditions a) and b) in Theorem \ref{thm:existence} hold. 
}
Furthermore, Steps 5 and 6 produce
\begin{align}
    \label{eq:solutionB}
    B = \begin{bmatrix}
        b_2 & 0 & 0& 0& 0\\
        0 & b_1& 0& 0& 0
    \end{bmatrix}^\top,
\end{align}
that is an optimal solution to Problem (\ref{prob:cmcp0}).
In fact, from Theorem \ref{thm:cmcp0}, we have the minimum number of inputs $n_D = n-|M^\ast| = 2$. Also, $V^-\setminus \partial^- M' = \{e_1, e_2\}$. Thus, connecting $u_1$ to $e_1$, and $u_2$ to $e_2$ , we have $B$ as (\ref{eq:solutionB}).


\section{Conclusion} \label{sec:conclusion}

In this study, we introduced the forbidden equations to MCP0 for structural descriptor systems.
We \textcolor{black}{gave} a necessary and sufficient condition for the existence of solutions to the problem and provided the solution to MCP0. The algorithm for solving the problem can be computed in polynomial time as for the standard MCP0.
That is, our proposed algorithm \textcolor{black}{is well positioned for applications} to large-scale descriptor systems.

In this paper, we focused on MCP0 for a structural descriptor system, since, for a structural descriptor system, MCP1 is NP-hard in general as shown in \cite{terasaki2021minimal}. Thus, finding a solvable condition for MCP1 with forbidden equations in polynomial time would be a future project.

\textcolor{black}{
Furthermore, structural controllability considered in this paper requires that the system parameters be algebraically independent. This means that all non-zero system parameters are free, which may be a strong assumption for practical situations. To avoid this assumption, strong structural controllability has been proposed in \cite{mayeda1979strong},
and studied from a graph-theoretic perspective in \cite{jia2021unifying}.
A strong structural controllability problem version in this paper 
is one of the future works.
}


%


\section*{Acknowledgment}
This work was supported by the Japan Society for the Promotion of Science KAKENHI under Grant 20K14760 and 23K03899. 

\ifCLASSOPTIONcaptionsoff
  \newpage
\fi



\bibliographystyle{IEEEtran}
\bibliography{main.bib}

\begin{thebibliography}{10}
\providecommand{\url}[1]{#1}
\csname url@samestyle\endcsname
\providecommand{\newblock}{\relax}
\providecommand{\bibinfo}[2]{#2}
\providecommand{\BIBentrySTDinterwordspacing}{\spaceskip=0pt\relax}
\providecommand{\BIBentryALTinterwordstretchfactor}{4}
\providecommand{\BIBentryALTinterwordspacing}{\spaceskip=\fontdimen2\font plus
\BIBentryALTinterwordstretchfactor\fontdimen3\font minus
  \fontdimen4\font\relax}
\providecommand{\BIBforeignlanguage}[2]{{%
\expandafter\ifx\csname l@#1\endcsname\relax
\typeout{** WARNING: IEEEtran.bst: No hyphenation pattern has been}%
\typeout{** loaded for the language `#1'. Using the pattern for}%
\typeout{** the default language instead.}%
\else
\language=\csname l@#1\endcsname
\fi
#2}}
\providecommand{\BIBdecl}{\relax}
\BIBdecl

\bibitem{mesbahi2010graph}
M.~Mesbahi and M.~Egerstedt, \emph{Graph theoretic methods in multiagent
  networks}.\hskip 1em plus 0.5em minus 0.4em\relax Princeton University Press,
  2010.

\bibitem{rahmani2009controllability}
A.~Rahmani, M.~Ji, M.~Mesbahi, and M.~Egerstedt, ``Controllability of
  multi-agent systems from a graph-theoretic perspective,'' \emph{SIAM Journal
  on Control and Optimization}, vol.~48, no.~1, pp. 162--186, 2009.

\bibitem{gu2015controllability}
S.~Gu, F.~Pasqualetti, M.~Cieslak, Q.~K. Telesford, B.~Y. Alfred, A.~E. Kahn,
  J.~D. Medaglia, J.~M. Vettel, M.~B. Miller, S.~T. Grafton \emph{et~al.},
  ``Controllability of structural brain networks,'' \emph{Nature
  Communications}, vol.~6, no.~1, pp. 1--10, 2015.

\bibitem{muldoon2016stimulation}
S.~F. Muldoon, F.~Pasqualetti, S.~Gu, M.~Cieslak, S.~T. Grafton, J.~M. Vettel,
  and D.~S. Bassett, ``Stimulation-based control of dynamic brain networks,''
  \emph{PLoS Computational Biology}, vol.~12, no.~9, p. e1005076, 2016.

\bibitem{pagani2013power}
G.~A. Pagani and M.~Aiello, ``The power grid as a complex network: a survey,''
  \emph{Physica A: Statistical Mechanics and its Applications}, vol. 392,
  no.~11, pp. 2688--2700, 2013.

\bibitem{yang2020critical}
D.-S. Yang, Y.-H. Sun, B.-W. Zhou, X.-T. Gao, and H.-G. Zhang, ``Critical nodes
  identification of complex power systems based on electric cactus structure,''
  \emph{IEEE Systems Journal}, vol.~14, no.~3, pp. 4477--4488, 2020.

\bibitem{liu2011controllability}
Y.-Y. Liu, J.-J. Slotine, and A.-L. Barab{\'a}si, ``Controllability of complex
  networks,'' \emph{Nature}, vol. 473, no. 7346, pp. 167--173, 2011.

\bibitem{pasqualetti2014controllability}
F.~Pasqualetti, S.~Zampieri, and F.~Bullo, ``Controllability metrics,
  limitations and algorithms for complex networks,'' \emph{IEEE Transactions on
  Control of Network Systems}, vol.~1, no.~1, pp. 40--52, 2014.

\bibitem{summers2015submodularity}
T.~H. Summers, F.~L. Cortesi, and J.~Lygeros, ``On submodularity and
  controllability in complex dynamical networks,'' \emph{IEEE Transactions on
  Control of Network Systems}, vol.~3, no.~1, pp. 91--101, 2015.

\bibitem{clark2017submodularity}
A.~Clark, B.~Alomair, L.~Bushnell, and R.~Poovendran, ``Submodularity in input
  node selection for networked linear systems: Efficient algorithms for
  performance and controllability,'' \emph{IEEE Control Systems Magazine},
  vol.~37, no.~6, pp. 52--74, 2017.

\bibitem{romao2018distributed}
L.~Romao, K.~Margellos, and A.~Papachristodoulou, ``Distributed actuator
  selection: Achieving optimality via a primal-dual algorithm,'' \emph{IEEE
  Control Systems Letters}, vol.~2, no.~4, pp. 779--784, 2018.

\bibitem{sato2020controllability}
K.~Sato and A.~Takeda, ``Controllability maximization of large-scale systems
  using projected gradient method,'' \emph{IEEE Control Systems Letters},
  vol.~4, no.~4, pp. 821--826, 2020.

\bibitem{olshevsky2015minimal}
A.~Olshevsky, ``Minimal controllability problems,'' \emph{IEEE Transactions on
  Control of Network Systems}, vol.~1, no.~3, pp. 249--258, 2014.

\bibitem{pequito2015framework}
S.~Pequito, S.~Kar, and A.~P. Aguiar, ``A framework for structural input/output
  and control configuration selection in large-scale systems,'' \emph{IEEE
  Transactions on Automatic Control}, vol.~61, no.~2, pp. 303--318, 2015.

\bibitem{clark2017input}
A.~Clark, B.~Alomair, L.~Bushnell, and R.~Poovendran, ``Input selection for
  performance and controllability of structured linear descriptor systems,''
  \emph{SIAM Journal on Control and Optimization}, vol.~55, no.~1, pp.
  457--485, 2017.

\bibitem{terasaki2021minimal}
S.~Terasaki and K.~Sato, ``Minimal controllability problems on linear
  structural descriptor systems,'' \emph{IEEE Transactions on Automatic
  Control}, vol.~67, no.~5, pp. 2522--2528, 2022.

\bibitem{lin1974structural}
C.-T. Lin, ``Structural controllability,'' \emph{IEEE Transactions on Automatic
  Control}, vol.~19, no.~3, pp. 201--208, 1974.

\bibitem{dai1989singular}
L.~Dai, \emph{Singular control systems}.\hskip 1em plus 0.5em minus 0.4em\relax
  Springer, 1989.

\bibitem{murota00}
K.~Murota, \emph{Matrices and matroids for systems analysis}.\hskip 1em plus
  0.5em minus 0.4em\relax Springer Science \& Business Media, 2009.

\bibitem{duan2010analysis}
G.-R. Duan, \emph{Analysis and design of descriptor linear systems}.\hskip 1em
  plus 0.5em minus 0.4em\relax Springer Science \& Business Media, 2010.

\bibitem{liu2016control}
Y.-Y. Liu and A.-L. Barab\'asi, ``Control principles of complex systems,''
  \emph{Reviews of Modern Physics}, vol.~88, p. 035006, 2016.

\bibitem{RAMOS2022110229}
G.~Ramos, A.~P. Aguiar, and S.~Pequito, ``An overview of structural systems
  theory,'' \emph{Automatica}, vol. 140, p. 110229, 2022.

\bibitem{murota1984descriptor}
K.~Murota, ``Structural controllability of a system in descriptor form
  expressed in terms of bipartite graphs (in japanese),'' \emph{Transactions of
  the Society of Instrument and Control Engineers}, vol.~20, no.~3, pp.
  272--274, 1984.

\bibitem{reinschke1997digraph}
K.~J. Reinschke and G.~Wiedemann, ``Digraph characterization of structural
  controllability for linear descriptor systems,'' \emph{Linear Algebra and its
  Applications}, vol. 266, pp. 199--217, 1997.

\bibitem{korte2012combinatorial}
B.~Korte, J.~Vygen, B.~Korte, and J.~Vygen, \emph{Combinatorial
  optimization}.\hskip 1em plus 0.5em minus 0.4em\relax Springer, 2012, vol.~2.

\bibitem{yip1981solvability}
E.~Yip and R.~Sincovec, ``Solvability, controllability, and observability of
  continuous descriptor systems,'' \emph{IEEE Transactions on Automatic
  Control}, vol.~26, no.~3, pp. 702--707, 1981.

\bibitem{berger2013controllability}
T.~Berger and T.~Reis, ``Controllability of linear differential-algebraic
  systems—a survey,'' in \emph{Surveys in differential-algebraic equations
  I}.\hskip 1em plus 0.5em minus 0.4em\relax Springer, 2013, pp. 1--61.

\bibitem{yamada1985generic}
T.~Yamada and D.~Luenberger, ``Generic controllability theorems for descriptor
  systems,'' \emph{IEEE Transactions on Automatic Control}, vol.~30, no.~2, pp.
  144--152, 1985.

\bibitem{mayeda1979strong}
H.~Mayeda and T.~Yamada, ``Strong structural controllability,'' \emph{SIAM
  Journal on Control and Optimization}, vol.~17, no.~1, pp. 123--138, 1979.

\bibitem{jia2021unifying}
J.~Jia, H.~J. Van~Waarde, H.~L. Trentelman, and M.~K. Camlibel, ``A unifying
  framework for strong structural controllability,'' \emph{IEEE Transactions on
  Automatic Control}, vol.~66, no.~1, pp. 391--398, 2021.

\end{thebibliography}




%




\end{document}